\newtheorem{lem}{Lemma}[section] 
\newtheorem{thm}{Theorem}[section]
\newtheorem{cor}[thm]{Corollary}
\numberwithin{equation}{section}
\definecolor{forestgreen}{rgb}{0.0, 0.27, 0.13}
\theoremstyle{definition}
\newtheorem{defn}{Definition}[section]
\theoremstyle{remark}
\newtheorem{rmk}{Remark}[section]
\title{Stability of multi-population traffic flows}
\date{\empty}
\author[a]{Amaury Hayat}
\author[b]{Benedetto Piccoli}
\author[c]{Shengquan Xiang}
\affil[a]{CERMICS, Ecole des Ponts ParisTech, Marne-la-Vallée, France}
\affil[b]{Department of Mathematical Sciences and Center for Computational and Integrative Biology, Rutgers University–Camden, 303 Cooper St, Camden, NJ, USA}
\affil[c]{Bâtiment des Mathématiques, EPFL, Station 8, CH-1015 Lausanne, Switzerland.}
\begin{document}

\maketitle
\begin{abstract}
    Traffic waves, known also as stop-and-go waves or phantom hams, appear naturally as traffic instabilities, also in confined environments as a ring-road.
    A multi-population traffic is studied on a ring-road, comprised of drivers with stable and unstable behavior.
    There exists a critical penetration rate of stable vehicles above which the system is stable, and under which the system is unstable. In the latter case, stop-and-go waves appear, provided enough cars are on the road. The critical penetration rate is explicitly computable, and, in reasonable situations, a small minority of aggressive drivers is enough to destabilize an otherwise very stable flow. 
    This is a source of instability that a single population model would not be able to explain.
    Also, the multi-population system can be stable below the critical penetration rate if the number of cars is sufficiently small. Instability emerges as the number of cars  increases, even if the traffic density remains the same (i.e. number of cars and road size increase similarly). This shows that small experiments could lead to deducing imprecise stability conditions.
\end{abstract}

\section{Introduction}

Traffic jams that appear for seemingly no reason are a phenomenon that everyone has experienced.  Even though there are no accidents, no lane reductions, no highway exits, so called stop-and-go waves form.\\
This paradox has a mathematical answer: under certain conditions, traffic equilibrium states are unstable. This phenomenon has been observed and studied in many works. In particular the study of \cite{Sugiyama}, and the experiment  associated, provided evidence that such waves form on a circular road with only about twenty drivers starting from same spacing and speed. The impact of the waves include increased fuel consumption as measured in a similar experiment
\cite{stern2018dissipation}.

The consequence of such traffic instabilities are important: accelerating and decelerating increase strongly the fuel consumption and the gas emissions compared to the associated equilibrium flow. 
Reducing these instabilities by acting on the traffic, for instance rendering the equilibrium stable, would have a strong impact. This is why many approaches have been considered to solve the problem, from ramp-metering control \cite{Rampmeter1,Rampmeter2,BastinCoron1D, gugat2005optimal,yu2019traffic,yu2020bilateral,Aurioltraffic} to the use of connected autonomous vehicles which \textcolor{black}{act as ``wave-dampeners''} in the traffic (see for instance \cite{talebpour2016influence,wang2015cooperative,CuiSeibold2017,stern2018dissipation,zheng2020smoothing,wang2020controllability} and \cite{delleLiardmicro} for a more detailed review). But, to be able to act on traffic efficiently, one needs to understand how these traffic waves emerge and their stability.

The main trigger for the creation of waves is the collective behavior of human agents on the road, which typically becomes unstable when the equilibrium velocity cross a certain threshold and becomes too low.
They have been extensively studied in the literature, see \cite{FKNRS09,Helb01,KK94}, and recently explained theoretically in \cite{CuiSeibold2017} in a ring-road framework similar to \cite{Sugiyama,stern2018dissipation}. These works study a single-phase traffic where all drivers follow the same car-following model. However, in real life, it is likely that the instability of equilibrium flows is strongly influenced by the differences in driving characteristics among agents. 
These differences could occur due to the type of vehicle (trucks, SUV, small cars, etc.) or differences in behavior between drivers. For instance, trucks platooning was studied for its fuel-consumption impact in mixed traffic \cite{trucks,trucks2}. Another interesting situation is when some drivers adopt a collaborative way of driving \cite{HALLE2005320}. In this case the traffic can be seen as a system with two populations: the standard drivers and those who adopt the collaborative behavior. Several interesting questions could be asked: 
\textcolor{black}{
\begin{itemize}
    \item Can the presence of two or several populations create some instabilities?
    \item Would a minority of collaborative drivers be able to render stable a traffic that would otherwise be unstable?
On the contrary, could a minority of aggressive drivers make an otherwise very stable traffic unstable?
\end{itemize} 
}

In this paper we provide some answers to these questions.
Specifically, a mixed traffic on a ring-road is analyzed,
where two or more populations coexists. After characterizing the equilibrium flows, \textcolor{black}{we study the stability of the overall flow depending on the proportion of cars in each class of vehicles.}
\textcolor{black}{W}e show that, when an instable and a stable population co-exist, there is a critical penetration rate above which the system is stable and under which the system is unstable, provided there is enough cars on the road. This penetration rate is explicitly computable. We also provide qualitative bounds on the critical penetration rate $\tau_{0}$, in order to elucidate what makes a collaborating behavior effective from a traffic stability point of view.\\
\textcolor{black}{For reasonable parameters,} the critical penetration rate of stable vehicle is very high. This means that, not taking into account the differences in dynamics, can lead to not 
\textcolor{black}{understanding}
the source of instability of the flow. Indeed, we show that 
a small minority of aggressive drivers, in an otherwise very stable flow, is enough to break the stability. This is something that a single-population model cannot grasp.\\
Surprisingly, we also show that the ability of a group of drivers to stabilize or destabilize the system only depends on its penetration rate in the traffic and not on the order of the cars.
One could think for instance that three trucks in a row is less effective to stabilize the traffic than three trucks equally distributed on the ring-road. But it is not. \\
Finally, we also show that small experiments, as considered in \cite{Sugiyama,stern2018dissipation}, could lead to underestimate the instability of the traffic flow and the critical penetration rate. More precisely, for a small number of vehicles the system can still be stable even below the critical penetration rate, but becomes unstable when the number of cars increases.

The paper is organized as follows: in Section \ref{sec:traffic} we present the framework and the system, in Section \ref{sec:main-results} we state the main results, in Section \ref{sec:analysis} we provide an analysis of the traffic around its equilibrium flows, while in Section \ref{sec2pase} we show our results for a two-phase traffic flow and in Section \ref{sec:multi} for a general multi-phase traffic flow. Finally, in Section \ref{sec:numerics} we provide some numerical simulations to illustrate our results.

\section{General traffic model on a ring-road}
\label{sec:traffic}
\textcolor{black}{We study a general traffic model with $n$ vehicles and a ring-road of} length $L$. Mathematically this means that we consider the system on the domain $\mathbb{T}= \mathbb{R}/L \mathbb{Z}$.  We denote by $\{x_j(t)\}_{j=1}^n$ the location of  the cars. By further denoting the headway and velocity of the cars as
\begin{equation}
h_j(t)= x_{j+1}(t)- x_j(t) \textrm{ and } v_j(t)= \dot{x}_j(t),
\end{equation}
the traffic is generally described by {\color{black}
\begin{equation}\label{model-ge}
\begin{cases}
     \dot{h}_j(t)= v_{j+1}(t)- v_j(t), \\
    \dot v_j(t)= f_j(h_j(t), \dot h_j(t), v_j(t)),
\end{cases} 
\, \forall j\in \{1, 2,..., n\}
\end{equation}
\textcolor{black}{where $f_{j}$ is the car following model for the driver $j$ and} with the convention $x_{n+1}= x_1+ L$ on the ring road (or, equivalently, $x_{n+1}= x_{\textcolor{black}{1}}$ in $\mathbb{T}$),} \textcolor{black}{which means that 
\begin{equation}
\label{eq:cons}
    \sum\limits_{j=1}^{n}h_{j}(t) = L,\;\;\forall\; t\geq 0.
\end{equation}
} For a general car following model we usually have the following physical conditions on $f_j(h, 0, v)$:
\begin{equation}\label{nat-con-f}
    \frac{\partial}{\partial h} f_j(h, 0, v)>0, \;  \frac{\partial}{\partial v} f_j(h, 0, v)<0,\;\textcolor{black}{\frac{\partial}{\partial \dot h} f_j(h, 0, v)>0.}
\end{equation}
\textcolor{black}{The first condition simply means that, for a given speed, the incentive to accelerate increase with the headway. The second condition means that for a given headway the incentive to accelerate decrease with the speed. And the third condition means that} for a given headway and speed, the incentive to accelerate increases if the headway is \textcolor{black}{currently} increasing (i.e. if the leading vehicle is moving away). These conditions can be found in nearly all car following models \textcolor{black}{(e.g. Intelligent Driver Models \cite{treiber2000congested}, Follow-the-Leader \cite{gazis1961nonlinear}, Bando-FTL \cite{bando1995dynamical, CuiSeibold2017}, etc.)}\\

\textcolor{black}{While, in most traffic analysis, the car following model $f$ is chosen to be identical for all cars on the road, in general the car-following model $f_j$ depends}
on the driving habit of the $j$-th driver, which may differ from driver to driver. It can also depends on the  $j$-th vehicle itself since different kinds of cars may result in different parameters ($e.g.$ one can simply compare a truck and a mini cooper on the road). We summarize \textcolor{black}{three typical} cases below:
\begin{itemize}
    \item {\it Unified model}
    
    This is the case that is most commonly considered, where $f_j$ does not depend on $j\in \{1, 2,..., n\}$ and there is a single type of vehicle on the road. The stability of this system has been studied in \cite{CuiSeibold2017} and their results are recalled below.  Under such a unified setting, a further special case can be the so-called Bando-Follow the leader model (Bando-FTL, or equivalently OV-FTL), where
\begin{equation}\label{Bando-FTL}
    f_j(h, \dot{h}, v)= a \cdot \left(V(h)- v\right)+ b \cdot\frac{\dot{h}}{h^2}.
\end{equation}
This model was studied for instance in \cite{HPT2021,CuiSeibold2017,delleLiardmicro}. It combines a Bando (or Optimal Velocity) part introduced in \cite{bando1995dynamical} which represents the preference of a driver to reach its ``own" optimal velocity (that depends on the headway), and a ``Follow the Leader" (FTL) part introduced in \cite{gazis1961nonlinear} which represent the incentive of the driver to mimic its leader. In \eqref{Bando-FTL} the Bando part has a weight $a$ while the FTL part has a weight $b$.

    \item {\it Mixed traffic} or {\it Collaborative driving}
    
    In a mixed traffic, the functions $\{f_j\}_{j=1}^n$ are chosen from a finite set. In other words, the drivers and the vehicles can be classified in finitely many categories: 
    \begin{equation}\label{ex:cd}
        f_j\in \{F_k: k=1,2,..., m\}, \; \forall j\in \{1, 2,...,n\}
    \end{equation}
    with $m\ll n$. \textcolor{black}{A particular example is the two population traffic, where $m=2$ while $n$ might be large. This situation corresponds for instance to a collaborative driving where some drivers follow a collaborative behaviors and have a ``good" function $F_{1}$ while the rest of the traffic follows a standard function $F_{2}$ that would lead to instabilities and stop-and-go waves. We deal with this case in Section \ref{sec2pase}.} To illustrate the mixed traffic setting, we can look at the case where the $\{f_{j}\}$ are given by the {\color{black} ``general" } Bando-FTL model,  characterised by
    \begin{gather}
    f_j(h, \dot{h}, v)= a_j \cdot \left(V_j(h)- v\right)+ b_j \cdot\frac{\dot{h}}{h^2}, \label{ex:cdB} \\
    (a_j, b_j, V_j)\in \{(A_k, B_k, \mathcal{V}_k): k=1, 2,..., m\}, \label{ex:cdB2}
    \end{gather}
    In this case, \textcolor{black}{the weights} $(a_j, b_j)$ represents driving habit of the driver, and that the ``Bando function" \textcolor{black}{$V_j(h)$ represents its velocity preference, which can depend for instance of the type of vehicle (cars, trucks, etc.). {\color{black}We remark that in the literature  for Bando model the fucntion $V(h)$ is fixed, however, in reality different type of vehicles may have direct influence, that is the reason we call it ``general" Bando and adapt different $V_j(h)$ fucntions.}  We provide in Section \ref{sec:numerics} numerical simulations for this particular model.}  
    
    \item {\itshape Mixed traffic with common velocity preference}
    
    \textcolor{black}{This is a particular case of mixed traffic where
    all drivers have the same velocity preference. For instance for the Bando-FTL model this means that $V_{j}=V$ and is the same across drivers, while $a_{j}$ and $b_{j}$ remain driver-dependent. As we will see in Section \ref{sec:equi}, in this case the equilibrium flows are the same as the equilibrium flows in the {\itshape unified model}.} 
\end{itemize}
\section{Main results}
\label{sec:main-results}
We are interested in the stability of the system \eqref{model-ge} around its equilibrium flow. A precise description of the equilibrium flows is given below in Section \ref{sec:equi}. In the unified model, where all drivers have the same car-following model $f_{j} = f$, the stability of the system was studied in \cite{CuiSeibold2017}. Denoting $(h_{eq},v_{eq})$ the uniform flow equilibrium and 
\begin{equation}\label{def-abg}
    (\alpha, \beta, \gamma)= \left(\frac{\partial f}{\partial h}, \;\frac{\partial f}{\partial \dot{h}}- \frac{\partial f}{\partial v},\; \frac{\partial f}{\partial \dot{h}}\right)(h_{eq}, 0, v_{eq}),
\end{equation}
 satisfying the natural ``common sense" condition:
\begin{equation}\label{eq-con-com-se}
\alpha>0, \;   \beta>\gamma>0,
\end{equation}
the authors of \cite{CuiSeibold2017} showed the following

\begin{thm}[\cite{CuiSeibold2017}, {\it unified model}]
\label{lem1}
 The uniform flow equilibrium $(h_{eq},v_{eq})$ of the system \eqref{model-ge} is \begin{itemize}
    \item {\it locally stable} around this flow, if \begin{equation}\label{eq:pos}
    \beta^2-\gamma^2-2\alpha\geq 0,
\end{equation}
    \item {\it unstable} around this flow provided $n$ sufficiently large, if
     \begin{equation}\label{eq:neg}
    \beta^2-\gamma^2-2\alpha< 0.
\end{equation}
\end{itemize}
\end{thm}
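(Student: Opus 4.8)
The plan is to linearise \eqref{model-ge} at the uniform equilibrium $(h_{eq},v_{eq})$, exploit the cyclic (circulant) structure of the ring to split the linearisation into $n$ scalar quadratics, and then locate the roots of those quadratics. Setting $h_j=h_{eq}+\tilde h_j$, $v_j=v_{eq}+\tilde v_j$ and using $\partial_v f=\gamma-\beta$ at the equilibrium, the linearised equations read
\[
\dot{\tilde h}_j=\tilde v_{j+1}-\tilde v_j,\qquad \dot{\tilde v}_j=\alpha\,\tilde h_j+\gamma\,\tilde v_{j+1}-\beta\,\tilde v_j ,
\]
with indices taken mod $n$. I would insert the discrete Fourier ansatz $(\tilde h_j,\tilde v_j)\propto e^{\mathrm i\phi_k j}e^{\lambda t}$, $\phi_k=2\pi k/n$, $k=0,\dots,n-1$, which decouples the system into the characteristic equations
\[
P_k(\lambda):=\lambda^2+(\beta-\gamma e^{\mathrm i\phi_k})\lambda-\alpha\,(e^{\mathrm i\phi_k}-1)=0 .
\]
The mode $k=0$ gives $\lambda\in\{0,\,-(\beta-\gamma)\}$; the zero eigenvalue is the neutral direction coming from the conservation law \eqref{eq:cons} (a simultaneous shift of all headways), and its eigenvector is transverse to the constraint surface $\sum_j\tilde h_j=0$, so on that surface the $k=0$ block is one–dimensional and decays at rate $\beta-\gamma>0$. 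Consequently the stability of the equilibrium is decided by the roots of $P_k$ for $k\neq0$, i.e. for $\phi_k\in(0,2\pi)$.

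To find the stability boundary I would look for purely imaginary roots $\lambda=\mathrm i\eta$ of $P_k$ with $\phi:=\phi_k\in(0,2\pi)$. Splitting $P_k(\mathrm i\eta)=0$ into real and imaginary parts, the imaginary part gives $\eta=\alpha\sin\phi/(\beta-\gamma\cos\phi)$ (well defined since $\beta-\gamma\cos\phi\ge\beta-\gamma>0$ by \eqref{eq-con-com-se}); substituting into the real part and cancelling the common factor $1-\cos\phi$ yields the single scalar relation
\[
\cos\phi=\frac{\beta^2+\beta\gamma-\alpha}{\gamma^2+\beta\gamma+\alpha}=:c_\star .
\]
A one–line computation shows $c_\star<1\iff\beta^2-\gamma^2-2\alpha<0$, while $c_\star\ge1$ in the complementary case; moreover $c_\star>-1$ always (it reduces to $(\beta+\gamma)^2>0$), so whenever $c_\star<1$ the angle $\arccos c_\star\in(0,\pi)$ is a genuine crossing. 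This is exactly the dichotomy in the statement, and it remains to fix which side of the crossing is stable.

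If $\beta^2-\gamma^2-2\alpha\ge0$ then $c_\star\ge1$, so no $\phi\in(0,2\pi)$ solves the relation and, since $\lambda=0$ forces $\phi=0$, no root of $P_k$ ($k\neq0$) lies on the imaginary axis for any $\phi\in(0,2\pi)$. The roots of $P_k$ depend continuously on $\phi$ and stay bounded, so the number of them in the open right half–plane is constant on $(0,2\pi)$; evaluating as $\phi\to0^+$, where one branch stays near $-(\beta-\gamma)$ and the other obeys $\Re\lambda=-\frac{\alpha(\beta^2-\gamma^2-2\alpha)}{2(\beta-\gamma)^3}\,\phi^2+O(\phi^3)$, this number is $0$. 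Hence (when the inequality is strict) the linearisation has a spectral gap on the constraint surface, giving local exponential stability there and, by Lyapunov's indirect method after discarding the neutral direction, local stability of the equilibrium. If instead $\beta^2-\gamma^2-2\alpha<0$, the same expansion gives $\Re\lambda>0$ for all small $\phi>0$, so the modes with $\phi_k\in(0,\arccos c_\star)$ (and, by the symmetry $k\leftrightarrow n-k$, those near $2\pi$) are unstable; as soon as $n$ is large enough that the grid $\{2\pi k/n\}$ meets this band, i.e. $2\pi/n<\arccos c_\star$, some $P_k$ has a root with positive real part and the equilibrium is unstable. This is precisely why ``$n$ sufficiently large'' is needed.

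I expect two points to require the most care. The first is the borderline case $\beta^2-\gamma^2-2\alpha=0$: there $c_\star=1$, the $\phi^2$ term in the small–$\phi$ expansion vanishes, and one must push the expansion to higher order (or argue directly) to check that the near–zero branch stays in the closed left half–plane and the equilibrium is at least Lyapunov stable. The second is the careful handling of the ever-present neutral mode from \eqref{eq:cons}: one must verify it is the unique obstruction to exponential decay, that it is transverse to the constraint surface, and that working on that surface legitimately reduces the nonlinear stability question to the spectral one. The remaining algebra — deriving $c_\star$ and the sign of the $\phi^2$–coefficient — is routine, but every sign in it rests on \eqref{eq-con-com-se}, so it should be carried out explicitly.
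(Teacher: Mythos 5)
Your argument is correct, but it takes a genuinely different route from the one the paper relies on for this statement (namely the argument of Lemma \ref{lem:cre} specialized to $n_2=0$, following \cite{CuiSeibold2017}). You exploit the circulant structure of the unified ring to diagonalize the linearization by the discrete Fourier transform, reducing everything to the $n$ quadratics $P_k$, then locate imaginary-axis crossings via $\cos\phi=c_\star$ and expand the near-zero branch to get $\Re\lambda\sim-\tfrac{\alpha(\beta^2-\gamma^2-2\alpha)}{2(\beta-\gamma)^3}\phi^2$; I checked the algebra for $c_\star$ and for this coefficient and both are right. The paper instead computes the characteristic polynomial of the full $2n\times2n$ matrix by a block determinant, obtains the eigenvalue relation $\prod_j\frac{\gamma\lambda+\alpha}{\lambda^2+\beta\lambda+\alpha}=1$, and applies the maximum modulus principle to the holomorphic function $G$ on the closed right half-plane, so that stability reduces to the algebraic inequality $|G(ix)|^2\le1$, i.e. $x^2\ge-(\beta^2-\gamma^2-2\alpha)$, with instability obtained by a pigeonhole argument on the level curve $|G|=1$. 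Your route buys an explicit dispersion relation, the unstable wavenumber band $\phi\in\left(0,\arccos c_\star\right)$ and the explicit threshold $n>2\pi/\arccos c_\star$, but it is tied to translation invariance and does not survive the passage to several populations with arbitrary ordering, which is exactly where the paper's transfer-function/maximum-principle argument continues to work. Two small remarks: your continuity-of-the-root-count argument becomes cleaner if you evaluate at $\phi=\pi$, where $P_k(\lambda)=\lambda^2+(\beta+\gamma)\lambda+2\alpha$ is Hurwitz by \eqref{eq-con-com-se}, rather than at $\phi\to0^+$; this also settles the borderline case $\beta^2-\gamma^2-2\alpha=0$ that you flag as delicate, since there the crossing condition $\cos\phi=1$ still has no solution in $(0,2\pi)$. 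Finally, your treatment of the neutral mode (its eigenvector has $\tilde h_j\equiv(\beta-\gamma)/\alpha\neq0$, hence is transverse to $\mathcal{H}$) is exactly the paper's verification that $0$ is not in the spectrum of $M_n$ restricted to $\mathcal{H}$.
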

In this paper we investigate what happens when there is not anymore a single population of vehicles but several. In particular, what happens when vehicles with an unstable behavior (i.e. which satisfies \eqref{eq:neg}) coexists with vehicles with a stable behavior (i.e. which satisfies \eqref{eq:pos})?\\

Our main results are the following: consider first a two population system where the vehicles follow either the car following model $f^{1}$ or the car following model $f^{2}$. In this case, any stationary state (or equilibrium flow) can be described by $(\bar{h}_{i},\bar{v})$ with $i\in\{1,2\}$ (see Section \ref{sec:equi}). Define 
\begin{gather}
(\alpha^{i},\beta^{i}, \gamma^{i}) =\left(\frac{\partial f^{i}}{\partial h}, \;\frac{\partial f^{i}}{\partial \dot{h}}- \frac{\partial f^{i}}{\partial v},\; \frac{\partial f^{i}}{\partial \dot{h}}\right)(\bar{h}_{i}, 0, \bar{v}),\;\;i\in\{1,2\},
\end{gather}
we have the following theorem when both populations have a stable behavior
\begin{thm}
\label{th1}
If $(\beta^{1})^2-(\gamma^{1})^2-2\alpha^{1}\geq 0$ and $(\beta^{2})^2-(\gamma^{2})^2-2\alpha^{2}\geq 0$, then the steady-state $(\bar{h}_{i},\bar{v})_{i\in\{1,2\}}$ of the ring road system \eqref{model-ge}--\eqref{eq:cons} is locally exponentially stable.
\end{thm}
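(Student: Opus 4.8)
The plan is to establish exponential stability of the linearization on the invariant subspace cut out by \eqref{eq:cons}, and then conclude by the principle of linearized stability. Write $h_j = \bar h_{i(j)} + y_j$ and $v_j = \bar v + w_j$, where $i(j) \in \{1,2\}$ denotes the class of car $j$, and abbreviate $(\alpha_j,\beta_j,\gamma_j) := (\alpha^{i(j)},\beta^{i(j)},\gamma^{i(j)})$. Using the relations $\partial f_j/\partial h = \alpha_j$, $\partial f_j/\partial \dot h = \gamma_j$, $\partial f_j/\partial v = \gamma_j - \beta_j$ read off from \eqref{def-abg}, the linearization of \eqref{model-ge} around the steady state is
\begin{equation}
\dot y_j = w_{j+1} - w_j, \qquad \dot w_j = \alpha_j y_j - \beta_j w_j + \gamma_j w_{j+1}, \qquad j \in \{1,\dots,n\}\ (\text{indices mod } n),
\end{equation}
and \eqref{eq:cons} forces $\sum_{j} y_j = 0$, a relation preserved by the flow. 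Let $A$ be the resulting linear generator on $\mathbb{R}^{2n}$. Since the system is finite dimensional, it suffices to show that $A$ has no spectrum in the closed half-plane $\{\mathrm{Re}\,\lambda \ge 0\}$ other than a single eigenvalue at $0$ whose eigenvector does not lie in $\{\sum_j y_j = 0\}$.

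Next I would reduce the eigenvalue problem to a scalar, Nyquist-type condition. If $\lambda$ with $\mathrm{Re}\,\lambda\ge 0$, $\lambda\neq 0$, is an eigenvalue of $A$ with eigenvector $(y,w)$, then $w \ne 0$ (otherwise $\lambda y = 0$ forces $y=0$), and eliminating $y_j = (w_{j+1}-w_j)/\lambda$ gives $(\lambda^2 + \beta_j\lambda + \alpha_j)\,w_j = (\alpha_j + \gamma_j\lambda)\,w_{j+1}$ for every $j$. For $\mathrm{Re}\,\lambda\ge 0$ neither $\lambda^2 + \beta_j\lambda + \alpha_j$ (its roots have real part $<0$ since $\alpha_j,\beta_j>0$) nor $\alpha_j + \gamma_j\lambda$ (zero only at $\lambda = -\alpha_j/\gamma_j<0$) can vanish, so starting from a nonzero $w_{j_0}$ one sees inductively that every $w_j$ is nonzero and $w_{j+1}/w_j = g_{i(j)}(\lambda)$, where
\begin{equation}
g_i(\lambda) := \frac{\lambda^2 + \beta^i\lambda + \alpha^i}{\alpha^i + \gamma^i\lambda}, \qquad i \in \{1,2\}.
\end{equation}
Multiplying around the ring yields the characteristic relation $\prod_{j=1}^n g_{i(j)}(\lambda) = 1$; note that it depends only on the numbers $n_1,n_2$ of cars of each class and not on their ordering. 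It therefore suffices to prove $|g_i(\lambda)| > 1$ for every $i\in\{1,2\}$ and every $\lambda$ with $\mathrm{Re}\,\lambda\ge 0$, $\lambda\neq 0$, since then $\big|\prod_j g_{i(j)}(\lambda)\big| > 1$, contradicting the relation.

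The heart of the argument — and the only place the hypothesis $(\beta^i)^2 - (\gamma^i)^2 - 2\alpha^i \ge 0$ enters — is this modulus bound. Writing $\lambda = \sigma + i\omega$ with $\sigma \ge 0$ and $u := \sigma^2 + \beta^i\sigma + \alpha^i > 0$, a direct computation gives
\begin{equation}
\big|\lambda^2 + \beta^i\lambda + \alpha^i\big|^2 - \big|\alpha^i + \gamma^i\lambda\big|^2 = \omega^4 + \omega^2\big(2\sigma^2 + 2\beta^i\sigma + (\beta^i)^2 - (\gamma^i)^2 - 2\alpha^i\big) + \sigma\,(\sigma + \beta^i - \gamma^i)\,(u + \alpha^i + \gamma^i\sigma),
\end{equation}
where the last summand uses the factorization $u^2 - (\alpha^i + \gamma^i\sigma)^2 = \sigma(\sigma + \beta^i - \gamma^i)(u + \alpha^i + \gamma^i\sigma)$. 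Each of the three summands is nonnegative for $\sigma\ge 0$, thanks to $\beta^i > \gamma^i > 0$ from \eqref{eq-con-com-se} and the stability hypothesis, and the sum vanishes only if $\omega = 0$ and $\sigma(\sigma + \beta^i - \gamma^i) = 0$, i.e.\ $\lambda = 0$. This gives $|g_i(\lambda)| > 1$ on $\{\mathrm{Re}\,\lambda\ge0\}\setminus\{0\}$.

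It remains to dispose of $\lambda = 0$: the kernel of $A$ is spanned by the vector with $w_j \equiv W$ and $y_j = (\beta_j - \gamma_j)W/\alpha_j$, whose headway sum $W\sum_j (\beta_j - \gamma_j)/\alpha_j$ is nonzero for $W\neq 0$ (every term is positive), so $\ker A$ meets $\{\sum_j y_j = 0\}$ only at the origin. Hence $A$ restricted to the invariant subspace $\{\sum_j y_j = 0\}$ is Hurwitz, which yields exponential stability of the linearized dynamics there and thus, by linearized stability on the invariant manifold $\{\sum_j h_j = L\}$, local exponential stability of $(\bar h_i, \bar v)_{i\in\{1,2\}}$ for \eqref{model-ge}--\eqref{eq:cons}. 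I expect the main obstacle to be exactly the modulus estimate above: putting $|\lambda^2+\beta^i\lambda+\alpha^i|^2 - |\alpha^i+\gamma^i\lambda|^2$ into a manifestly nonnegative form and pinning down the equality case is what converts the scalar criterion $\beta^2-\gamma^2-2\alpha\ge 0$ into a statement about the location of the spectrum; the remaining steps — elimination of $y$, the telescoping product around the ring, and the neutral mode — are routine, and they make transparent why the verdict depends only on the penetration rates and not on the arrangement of the cars.
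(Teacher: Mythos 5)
Your proof is correct, and its skeleton coincides with the paper's: linearize, eliminate the headway perturbations to get the ring characteristic relation $\prod_j g_{i(j)}(\lambda)=1$ (the reciprocal of the paper's $\prod_j F_j(\lambda)=1$ in \eqref{eq:showeing}), observe that it depends only on $(n_1,n_2)$, and dispose of the neutral mode $\lambda=0$ by checking that its eigenvector has headway sum $W\sum_j(\beta_j-\gamma_j)/\alpha_j\neq 0$ and hence does not lie in $\mathcal{H}$ — this last step is verbatim the paper's argument in Section \ref{sec-studyeigen}. Where you genuinely diverge is the spectral localization. The paper (Lemma \ref{lem:cre}, following \cite{CuiSeibold2017}) treats $G(z)=\prod_j F_j(z)$ as a holomorphic function on the closed right half-plane, invokes the maximum principle together with $|G|\to 0$ at infinity to reduce everything to the imaginary axis, and then checks $|G(ix)|^2<1$ for $x\neq 0$, which for $\Delta^1,\Delta^2\ge 0$ amounts to $\Delta^i x^2+x^4>0$ factor by factor. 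You instead prove the pointwise inequality $|\lambda^2+\beta^i\lambda+\alpha^i|^2>|\alpha^i+\gamma^i\lambda|^2$ on all of $\{\mathrm{Re}\,\lambda\ge 0\}\setminus\{0\}$ by exhibiting the difference as a sum of three manifestly nonnegative terms (I checked the algebra; the factorization $u^2-(\alpha^i+\gamma^i\sigma)^2=\sigma(\sigma+\beta^i-\gamma^i)(u+\alpha^i+\gamma^i\sigma)$ and the coefficient $2\sigma^2+2\beta^i\sigma+\Delta^i$ of $\omega^2$ are both right, and the equality case forces $\lambda=0$). Your route is more elementary — no complex analysis beyond modulus computations — and fully sufficient for Theorem \ref{th1}. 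What the paper's maximum-principle reduction buys, and your argument does not, is sharpness: restricting to the imaginary axis turns \eqref{eq:keyor} into a criterion that is essentially necessary as well as sufficient, which is what powers the instability half of Theorem \ref{th12} and the computation of the critical penetration rate $\tau_0$. For the purely stable two-population case at hand, both arguments are complete.
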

When the population have different behaviors, we denote $n_{1}$ and $n_{2}$ the number of cars of each population in the road, and we have the following theorem
\begin{thm}
\label{th12}
Assume that $\Delta^{1}:=(\beta^{1})^2-(\gamma^{1})^2-2\alpha^{1}> 0$ and $\Delta^{2}:=(\beta^{2})^2-(\gamma^{2})^2-2\alpha^{2}<0$.
      There exists a  critical penetration rate $\tau_0\in (0, 1)$ such that for any pair $(n_1, n_2)\in \mathbb{N}^2$ verifying
\begin{equation}
    \frac{n_1}{n_1+ n_2} >  \tau_0, 
\end{equation}
the  ring road traffic system \eqref{model-ge}--\eqref{eq:cons} is locally exponentially stable around any the equilibrium flow $(\bar{h}_{i},\bar{v})_{i\in\{1,2\}}$, whatever the ordering of the cars.

On the other hand, for any fixed penetration rate
\begin{equation}
\tau= \frac{n_{1}}{n_{1}+n_{2}}    <\tau_{0}
\end{equation}
there exists some $M>0$ effectively computable such that for any ${n_1, n_2}\in \mathbb{N}^2$ satisfying
$n_1+n_2>M$,
the ring road traffic system \eqref{model-ge}--\eqref{eq:cons} is unstable around the the equilibrium flow $(\bar{h}_{i},\bar{v})_{i\in\{1,2\}}$.

Moreover, the critical penetration rate $\tau_{0}$ is explicitly given by
\begin{equation}
\label{eq:deftau0}
\begin{split}
    \tau_{0} &=1- \left(1+\max\left\{-\frac{H_2(y)}{H_1(y)};\;  y\in \left(0, \Gamma^2\right]\right\} \right)^{-1},\\
      \text{ where }\;\;\;H_{i}(y):&=\log\left(\frac{(\alpha^{i})^2+ (\gamma^{i})^2 y}{(\alpha^{i})^2+ ((\beta^{i})^2- 2\alpha^{i}) y+ y^2}\right),\;\;\text{ for }i\in\{1,2\},\\
      \text{ and }\;\;\; &\Gamma^2:= \frac{-(\alpha^2)^2+  \sqrt{(\alpha^2)^4- (\alpha^2)^2(\gamma^2)^2 \Delta^2}}{(\gamma^2)^2}\in (0, -\Delta^2).\\
      \end{split}
\end{equation}
\end{thm}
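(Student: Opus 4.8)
The plan is to linearize the ring-road system \eqref{model-ge}--\eqref{eq:cons} around the equilibrium $(\bar h_i,\bar v)$ and study the spectrum of the resulting constant-coefficient linear system. Writing $h_j = \bar h_{i(j)} + \tilde h_j$ and $v_j = \bar v + \tilde v_j$, where $i(j)\in\{1,2\}$ is the population of car $j$, the linearized equations become $\dot{\tilde h}_j = \tilde v_{j+1}-\tilde v_j$ and $\dot{\tilde v}_j = \alpha^{i(j)}\tilde h_j + \gamma^{i(j)}\dot{\tilde h}_j - (\beta^{i(j)}-\gamma^{i(j)})\tilde v_j$, using the definitions of $(\alpha^i,\beta^i,\gamma^i)$. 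Seeking exponential solutions $e^{st}$ reduces stability to locating the roots $s$ of the associated characteristic determinant. The key structural observation — already used implicitly in \cite{CuiSeibold2017} — is that the $n$ second-order scalar equations are coupled only through the cyclic difference operator, so after eliminating $\tilde v_j$ one obtains, for each candidate eigenvalue $s$, a \emph{scalar} transfer-function relation: each car $j$ contributes a factor $G_{i(j)}(s)$ relating its upstream headway perturbation to its downstream one, and going once around the ring forces $\prod_{j=1}^n G_{i(j)}(s) = 1$. Because $G$ depends on the car only through its population, this product equals $G_1(s)^{n_1}G_2(s)^{n_2}$, which is precisely why the ordering of the cars is irrelevant — the first main claim of the theorem.

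Next I would extract the explicit stability criterion. The characteristic equation $G_1(s)^{n_1}G_2(s)^{n_2}=1$ has the trivial root $s=0$ (conservation of total headway); stability hinges on whether all remaining roots satisfy $\mathrm{Re}(s)<0$. Following the standard Nyquist-type argument, one checks the imaginary axis: write $s = \mathrm{i}\omega$ and ask whether $|G_1(\mathrm{i}\omega)|^{n_1}|G_2(\mathrm{i}\omega)|^{n_2}$ can equal $1$ with the phases summing to a multiple of $2\pi$. A computation of $|G_i(\mathrm{i}\omega)|^2$ — this is where the functions $H_i$ come from, with $y=\omega^2$ — shows that $\log|G_i(\mathrm{i}\omega)|^2 = H_i(\omega^2)$, and the condition $\Delta^i\gtrless 0$ governs the sign of $H_i$ near $y=0$: $H_i\le 0$ on $(0,\infty)$ when $\Delta^i\ge 0$ (stable population), whereas $H_2>0$ on an interval $(0,\Gamma^2]$ when $\Delta^2<0$, with $\Gamma^2$ the positive root of the relevant quadratic. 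Then $n_1 H_1(y)+n_2 H_2(y)>0$ for some $y$ iff $n_2/n_1 > -H_1(y)/H_2(y)$ for some $y\in(0,\Gamma^2]$, i.e. iff $\tau = n_1/(n_1+n_2)$ lies below the threshold $\tau_0$ defined in \eqref{eq:deftau0} via the maximum of $-H_2/H_1$. For $\tau>\tau_0$ one has $n_1 H_1 + n_2 H_2 < 0$ everywhere, so $|G_1(\mathrm{i}\omega)|^{n_1}|G_2(\mathrm{i}\omega)|^{n_2}<1$ for all $\omega\ne 0$, no roots cross the axis, and a continuity/homotopy argument (deforming to a regime where the system is manifestly stable, e.g. shrinking the FTL-type coupling) confirms exponential stability. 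For $\tau<\tau_0$, pick $y^\star\in(0,\Gamma^2]$ with $n_1 H_1(y^\star)+n_2 H_2(y^\star)>0$; for $n_1+n_2$ large enough one can tune $\omega\approx\sqrt{y^\star}$ and use that the admissible phases $\sum \arg G_{i(j)}$ become dense modulo $2\pi$ as $n\to\infty$ to produce a genuine root with $\mathrm{Re}(s)>0$, giving instability, with $M$ extracted from how fine the phase lattice must be.

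The main obstacle is the instability direction: showing that when $n_1 H_1 + n_2 H_2 > 0$ somewhere there is \emph{actually} an eigenvalue in the open right half-plane, not merely a failure of the naive modulus test. One must argue carefully that the curve $\omega\mapsto G_1(\mathrm{i}\omega)^{n_1}G_2(\mathrm{i}\omega)^{n_2}$ (or an appropriate winding-number count for $G_1^{n_1}G_2^{n_2}-1$) genuinely encircles $1$, which requires controlling the phase of $G_i$ and knowing the total phase variation — precisely the place where the number of cars, not just the rate, enters, and where the effective threshold $M$ is quantified. A secondary technical point is passing from "some root on/inside" to \emph{exponential} (not just spectral) stability: this needs the linear semigroup to have spectrum bounded away from the imaginary axis except at $s=0$, and the $s=0$ mode to be handled on the invariant subspace $\sum\tilde h_j = 0$ where it is absent — a finite-dimensional argument, but one that must be stated cleanly. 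I would also double-check the boundary case $\Delta^1=0$ versus $\Delta^1>0$ to see whether the strict inequality in the theorem's hypothesis is essential for the maximum defining $\tau_0$ to be attained and lie strictly below $1$.
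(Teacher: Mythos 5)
Your overall strategy coincides with the paper's: linearize, reduce to the cyclic characteristic equation $G_1(s)^{n_1}G_2(s)^{n_2}=1$ (whence the independence of the ordering), test the modulus on the imaginary axis via $H_i(\omega^2)=\log|G_i(\mathrm{i}\omega)|^2$, define the threshold through $\max(-H_2/H_1)$, and obtain instability for large $n$ by making the roots of unity dense on the arc swept by the phase of $G_1^{n_1}G_2^{n_2}$ along the level curve $|G_1^{n_1}G_2^{n_2}|=1$ in the right half-plane. There are, however, two concrete gaps. The first is the passage from ``$|G_1(\mathrm{i}\omega)|^{n_1}|G_2(\mathrm{i}\omega)|^{n_2}<1$ for all $\omega\neq 0$'' to ``no eigenvalue with $\Re(s)>0$'': you replace this by an unproven homotopy (``deform to a manifestly stable regime''), whereas the paper's key observation is that all poles of $G:=G_1^{n_1}G_2^{n_2}$ lie in the open left half-plane by the common-sense condition \eqref{eq:com-sense} and $|G(z)|\to 0$ as $|z|\to\infty$, so by the maximum modulus principle the supremum of $|G|$ over the closed right half-plane is attained on the imaginary axis; the bound there immediately forbids solutions of $G(z)=1$ with $\Re z>0$. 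Your homotopy would additionally require a parameter path along which the imaginary-axis bound persists (otherwise roots may cross during the deformation), and this is neither constructed nor obviously available; as written, the stability half of the theorem is not established.

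The second gap concerns the explicit formula \eqref{eq:deftau0}. You identify $\Gamma^2$ as the right endpoint of the set where $H_2>0$; in fact $H_2>0$ exactly on $(0,-\Delta^2)$, and $\Gamma^2$ is the unique positive critical point of $H_2$ (the positive root of $(\gamma^2)^2y^2+2(\alpha^2)^2y+(\alpha^2)^2\Delta^2=0$), which lies strictly inside $(0,-\Delta^2)$. Since the stability condition is $\mu>-H_2(y)/H_1(y)$ for \emph{all} $y>0$, one must prove that $\sup_{y>0}\bigl(-H_2(y)/H_1(y)\bigr)$ is finite and is attained on $(0,\Gamma^2]$. The paper does this via the symmetry $H_2\bigl((\alpha^2)^2(-\Delta^2-y)/((\alpha^2)^2+(\gamma^2)^2y)\bigr)=H_2(y)$ combined with the strict monotonicity of $H_1$, plus the computation $\lim_{y\to0^+}\bigl(-H_2(y)/H_1(y)\bigr)=-\Delta^2(\alpha^1)^2/(\Delta^1(\alpha^2)^2)<\infty$, which is precisely where the strict inequality $\Delta^1>0$ enters (you correctly flagged this last point but left it open). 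Without these steps the quantity in \eqref{eq:deftau0} is only an a priori lower bound for the true critical rate, and the claimed optimality of $\tau_0$ (the instability below $\tau_0$) is not tied to the displayed formula.
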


Even though $\tau_{0}$ can be computed easily through a minimization algorithm, we can also give some practical upper and lower bounds for qualitative studies:
\begin{equation}
\begin{split}
\tau_{0} &\geq \frac{-\Delta^2 (\alpha^1)^2}{\Delta^1 (\alpha^2)^2-\Delta^2 (\alpha^1)^2},\\
\text{ and }\;\;\; \tau_{0}&\leq  \frac{(-\Delta^2) \left((\alpha^1)^2+ (\gamma^1)^2 \Gamma^2\right)\left((\alpha^1)^2+ ((\beta^1)^2- 2\alpha^1) \Gamma^2+ (\Gamma^2)^2\right)}{(\beta^2)^2 (\alpha^1)^2 \Delta^1+ (-\Delta^2) \left((\alpha^1)^2+ (\gamma^1)^2 \Gamma^2\right)\left((\alpha^1)^2+ ((\beta^1)^2- 2\alpha^1) \Gamma^2+ (\Gamma^2)^2\right)}.
\end{split}
\end{equation}
This allows a remark: for a stable class of vehicles to be efficient at stabilizing a mixed traffic flow with a small penetration rate, $\alpha^{1}$ should be small. This means that an efficient collaborating behavior for stabilizing traffic flow will be composed of vehicles driving without taking much the headway into consideration (apart for safety reasons).\\

Finally, when one of the population has a stable behavior but corresponding to the critical case $(\beta^{1})^2-(\gamma^{1})^2-2\alpha^{1}=0$ and the other population has an unstable behavior, we have the following theorem
\begin{thm}
\label{th13}
If $\Delta_{1}:=(\beta^{1})^2-(\gamma^{1})^2-2\alpha^{1}=0$ and $\Delta_{2}:=(\beta^{2})^2-(\gamma^{2})^2-2\alpha^{2}<0$, then provided sufficiently many vehicles on the ring road, the traffic system \eqref{model-ge}--\eqref{eq:cons} is unstable around the the equilibrium flow $(\bar{h}_{i},\bar{v})_{i\in\{1,2\}}$.
\end{thm}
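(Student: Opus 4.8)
The plan is to reduce the stability question to locating the roots of an explicit transcendental characteristic equation, and then to exhibit a root with positive real part by a perturbation argument at the first Fourier mode, exploiting the fact that in the \emph{critical case} $\Delta_{1}=0$ a certain second-order coefficient is forced to have the destabilizing sign.

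I would start from the linearization of \eqref{model-ge}--\eqref{eq:cons} around the equilibrium $(\bar h_{i},\bar v)_{i}$: writing $\tilde h_{j},\tilde v_{j}$ for the perturbations and using \eqref{nat-con-f}, the linearized dynamics read $\dot{\tilde h}_{j}=\tilde v_{j+1}-\tilde v_{j}$ and $\dot{\tilde v}_{j}=\alpha_{j}\tilde h_{j}-\beta_{j}\tilde v_{j}+\gamma_{j}\tilde v_{j+1}$, where $(\alpha_{j},\beta_{j},\gamma_{j})$ equals $(\alpha^{1},\beta^{1},\gamma^{1})$ or $(\alpha^{2},\beta^{2},\gamma^{2})$ according to the population of car $j$. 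A normal mode $(\tilde h_{j},\tilde v_{j})=e^{\lambda t}(H_{j},V_{j})$ with $\lambda\neq0$ then obeys $V_{j+1}=G_{j}(\lambda)V_{j}$ with $G_{i}(\lambda)=\frac{\lambda^{2}+\beta^{i}\lambda+\alpha^{i}}{\gamma^{i}\lambda+\alpha^{i}}$, so the ring periodicity $V_{n+1}=V_{1}$ forces the characteristic equation $G_{1}(\lambda)^{n_{1}}G_{2}(\lambda)^{n_{2}}=1$, which is independent of the ordering of the cars and whose eigenvectors automatically satisfy $\sum_{j}H_{j}=0$ (compatible with \eqref{eq:cons}). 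Since these $\lambda$'s are exactly the eigenvalues of the $2n$-dimensional linear system governing $(\tilde h,\tilde v)$, by Lyapunov's first method it suffices to produce one such $\lambda$ with $\operatorname{Re}\lambda>0$ to obtain (nonlinear) instability.

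I would then chase a root near $\lambda=0$, associated with the first mode. Because $G_{i}(0)=1$ and $G_{i}$ is analytic near $0$ (as $\alpha^{i}>0$), $\log G_{i}$ admits an analytic branch vanishing at $0$, and a direct computation gives $\log G_{i}(\lambda)=\frac{\beta^{i}-\gamma^{i}}{\alpha^{i}}\lambda-\frac{\Delta_{i}}{2(\alpha^{i})^{2}}\lambda^{2}+O(\lambda^{3})$. Put $n=n_{1}+n_{2}$, $\tau=n_{1}/n$, $C=\tau\frac{\beta^{1}-\gamma^{1}}{\alpha^{1}}+(1-\tau)\frac{\beta^{2}-\gamma^{2}}{\alpha^{2}}$ (which is $>0$ by \eqref{eq-con-com-se}) and $D=\tau\frac{\Delta_{1}}{2(\alpha^{1})^{2}}+(1-\tau)\frac{\Delta_{2}}{2(\alpha^{2})^{2}}$. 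Taking logarithms, a root of the characteristic equation coming from the first mode solves $n_{1}\log G_{1}(\lambda)+n_{2}\log G_{2}(\lambda)=2\pi i$, which after the scaling $\lambda=z/n$ reads $Cz-\frac{D}{n}z^{2}+O(n^{-2})=2\pi i$; as $n\to\infty$ the left side tends, locally uniformly in $z$, to $Cz$, whose unique solution of $Cz=2\pi i$ is $z=2\pi i/C$. By Rouch\'e's theorem (equivalently Hurwitz) there is, for $n$ large, a genuine root $z_{n}\to2\pi i/C$, hence a root $\lambda_{n}=z_{n}/n$ of the characteristic equation with $\operatorname{Re}\lambda_{n}=-\frac{4\pi^{2}D}{C^{3}n^{2}}+O(n^{-3})$. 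In the critical case $\Delta_{1}=0$ one has $D=(1-\tau)\frac{\Delta_{2}}{2(\alpha^{2})^{2}}<0$ for every $\tau\in(0,1)$ since $\Delta_{2}<0$, hence $\operatorname{Re}\lambda_{n}>0$ once $n$ is large enough, which is the asserted instability. This is precisely the degeneration ``$\tau_{0}=1$'' of Theorem~\ref{th12}: substituting $\Delta_{1}=0$ into \eqref{eq:deftau0} makes $-H_{2}(y)/H_{1}(y)\to+\infty$ as $y\to0^{+}$.

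The main technical obstacle is making the perturbative root-location fully rigorous: one must justify the Rouch\'e comparison uniformly in $n$ and control the $O(n^{-3})$ remainder with the correct dependence on $n_{1},n_{2}$. The point deserving the most care is the regime of validity — the argument above is cleanest when the proportion $\tau$ is held fixed, so that $D$ is a fixed negative constant that dominates the remainder; if instead $n_{2}$ stays bounded while $n_{1}\to\infty$, then $D=\Theta(1/n)$ is only of the same order as the $O(n^{-3})$ remainder, and one must push the expansion of $\operatorname{Re}\lambda_{n}$ one order further (keeping the $\Delta_{1}=0$-specialized cubic term of $\log G_{1}$) to confirm that the sign is still dictated by $\Delta_{2}<0$.
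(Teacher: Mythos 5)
Your proposal is correct, but it reaches the conclusion by a genuinely different mechanism than the paper. The paper reduces instability to the scalar criterion of Lemma \ref{lem:cre}: it shows that $\mu H_1(y)+H_2(y)>0$ for some small $y>0$ (using $\mu H_1(0)+H_2(0)=0$ and $\mu H_1'(0)+H_2'(0)=-\Delta_2/(\alpha^2)^2>0$ when $\Delta_1=0$), i.e.\ that $|G(ix)|>1$ somewhere on the imaginary axis, and then converts this into an actual unstable eigenvalue by the topological argument of Lemma \ref{lem:cre} (the arc $\mathcal{O}\subset G_1(\mathcal{C}^+)\subset\mathbb{S}^1$ must capture an $M$-th root of unity once $M$ is large). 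You instead construct the unstable eigenvalue directly, by a scaled perturbation expansion of the first-mode root of $n_1\log G_1+n_2\log G_2=2\pi i$; the two arguments hinge on the same quantity, since your second-order coefficient $-\Delta_i/(2(\alpha^i)^2)$ equals $\tfrac12 H_i'(0)$, so your $D<0$ is exactly the paper's $\mu H_1'(0)+H_2'(0)>0$. Your route is more quantitative (it yields the growth rate $\operatorname{Re}\lambda_n\sim -4\pi^2D/(C^3n^2)$ and identifies the destabilizing mode), at the price of the Rouch\'e/Hurwitz bookkeeping; the paper's route is softer but reuses machinery already established for Theorem \ref{th12}. Two minor points: stating the remainder as $O(n^{-3})$ claims one more order than your two-term expansion justifies, though only $o(n^{-2})$ is needed for the sign; and the delicate regime you flag at the end ($n_2$ bounded, $\tau\to1$) does not arise for this statement, since the paper's Theorem \ref{th2}(2) fixes the penetration rate before letting $n\to\infty$, which is precisely your clean regime where $D$ is a fixed negative constant.
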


\textcolor{black}{This can be generalized for an multi-phase traffic with more than two populations. (see Theorem \ref{thm:61} in Section \ref{sec:multi})} 

\section{Analysis of the system}
\label{sec:analysis}

\subsection{
\textcolor{black}{Characterization of the equilibrium flow}}
\label{sec:equi}
The aim of this section is to describe the stationary states, or equilibrium flows, of the ring road traffic \eqref{model-ge}--\eqref{eq:cons}. 
Here, ``stationary" and ``equilibrium" means that the headway and the velocity does not change with respect to time, \textcolor{black}{hence, for an equilibrium flow $(\bar{h}_{j},  \bar{v}_{j})_{j\in\{1,...,n\}}$,} {\color{black} we have 
\begin{equation}\label{eq:stat-desc}
    \bar{h}_j(t)= \bar{h} \textrm{ and } \bar{v}_j(t)= \bar{v},\;\;\textcolor{black}{\forall t\in[0,+\infty)}.
\end{equation} }
Before going into the details, let us introduce the ``{\it velocity preferred headway}" of $f_j$: for a given velocity $v$, the so-called  ``velocity preferred headway" is given, \textcolor{black}{when it exists}, by $h$ such that
\begin{equation}
    f_j(h, 0, v)=0,
\end{equation}
which, according to \eqref{nat-con-f}, admits \textcolor{black}{at most} a unique value, and we denote it by $g_j(v)$ \textcolor{black}{when it exists}.\\

Since the headway does not change, we have 
\begin{equation}
    \dot{h}_j(t)= \dot{x}_{j+1}(t)- \dot x_j(t)= v_{j+1}(t)-  v_j(t),
\end{equation}
which, to be combined with \eqref{eq:stat-desc}, imply that \textcolor{black}{there exists a constant $\bar{v}$ such that},
\begin{gather}
    v_j(t)= \bar{v}, \; \forall j\in \{1, 2,..., n\}, \; \forall t\in \textcolor{black}{\mathbb{R}_{+}} \\
    h_j(t)= \bar h_j, \; \forall t\in \textcolor{black}{\mathbb{R}_{+}}.
\end{gather}
Furthermore, thanks to \eqref{model-ge}, we have 
\begin{equation}\label{e:111}
    f_j(\bar h_j, 0, \bar v)=0,
\end{equation}
\textcolor{black}{thus $\bar{v}$ needs to be chosen in such a way that  $g^{j}(\bar{v})$ exists for all $j\in\{1,..n\}$ and that}
\begin{equation}
\bar h_j= g_j(\bar v).
\end{equation}
Therefore, the equilibrium flow of the traffic is given by,
\begin{equation}\label{eq:sta-mol}
    (h_j(t), v_j(t))= (g_j(\bar v), \bar v)
\end{equation}
on a ring road having length 
\begin{equation}\label{eq:Lsum}
    L= \sum_{j=1}^n g_j(\bar v).
\end{equation}

{\color{black}Keeping in mind the preceding characterization of equilibrium flow, we are able to address the stability of the ring road traffic around such flows.
\begin{defn}[Local stability around the equilibrium flow] \label{def:pvstab}
Let us consider an equilibrium flow, $\{(\bar x_j(t), \bar v)\}_{j=1}^n$. The ring road traffic  is said to be exponentially stable around this equilibrium flow, if for any initial state, $(x_1(0), ..., x_n(0),v_1(0),..., v_n(0))^{T}$ being sufficiently close to $(\bar x_1(0), ..., \bar x_n(0), \bar v,..., \bar v)^{T}$, the traffic satisfies
\begin{align}
    &\;\;\;\; |(x_1(t)-\bar x_1(t)-c,..., x_n(t)- \bar x_n(t)- c, v_1(t)- \bar v,..., v_n(t)- \bar v)|\\
    &\leq C e^{-\lambda t}  |(x_1(0)-\bar x_1(0),..., x_n(0)- \bar x_n(0), v_1(0)- \bar v,..., v_n(0)- \bar v)|
\end{align}
with some constant $c\in \mathbb{R}$ depending on the initial state. The constant $c$  comes from the fact that an equilibrium flow defines a headway and a velocity, but the location of the cars is only defined up to a constant. 
\end{defn}
\textcolor{black}{We remark here that this definition is equivalent to Definition \ref{def:stanon} given below of the stability of the traffic in terms of headway-velocity.}
}

\begin{rmk}[Ring road condition]\label{rmk-rrc}
We remark that for fixed $\{f_j\}_{j=1}^n$ and $L$ there is at most one equilibrium flow, $i.e.$ at most one value of $\bar v$ such that \eqref{eq:Lsum} holds. \textcolor{black}{This means that the length of the road imposes the equilibrium flow and the steady velocity $\bar{v}$ and reciprocally that imposing a given speed $\bar{v}$ and a given system of $n$ cars determines the length of the road.}
\textcolor{black}{S}ince in reality we may \textcolor{black}{want to the study the limit where} both $n$ and $L$ go to $\infty$, what really matters \textcolor{black}{for us} is the desired steady velocity. 
\textcolor{black}{For this reason} we do not fix the value of $L$ in this paper but we fix instead the value of $\bar{v}$, which in turns, defines the value of $L$ 
via \eqref{eq:Lsum}. \textcolor{black}{This situation} will be named as ``ring road condition" \textcolor{black}{in the following}.\\
Another important property of ``ring road condition" is that the value of $L$, \eqref{eq:Lsum}, does not depend on the order of $\{f_j\}_{j=1}^n$, namely if $\{\tilde f_j\}_{j=1}^n= \{f_j\}_{j=1}^n$ \textcolor{black}{up to some permutations} then their lengths of  ring road coincident. 
\end{rmk}

\begin{rmk}[\textcolor{black}{Number of parameters describing the equilibrium flow}]
Looking that \eqref{e:111}, the stationary headway $\bar h_j= g_j(\bar v)$ \textcolor{black}{only depends on the driving characterization function $f_j$. This means that it is identical for all vehicles with the same class of parameters.}
In particular
\begin{itemize}
    \item For {\it unified model}, $g_j(\bar v)$ does not depend on $j\in \{1, 2,..., n\}$. \textcolor{black}{Thus, only 2 parameters describe the n-vehicles equilibrium flow: $\bar v$ and $g(\bar{v})$.}
    \item For general {\it collaborative driving} 
    as indicated in \eqref{ex:cd} or \eqref{ex:cdB}--\eqref{ex:cdB2}, $g^j(\bar v)$ has $m$ different choices. \textcolor{black}{Hence, the n-vehicles equilibrium flow is described by m+1 parameters.}
    \item For the unified car Bando-FTL model of collaborative driving described by \eqref{ex:cdB}--\eqref{ex:cdB2} with $V_j= V$, \textcolor{black}{where the drivers have different driving habits but the same velocity preference}, $g_j(\bar v)$ does not depend on $j\in \{1, 2,..., n\}$ \textcolor{black}{and the equilibrium flow is still only described by two parameters, despite different driving habits.}
    Actually, \textcolor{black}{in this particular case,} we observe from \eqref{ex:cdB} that $g_j(\bar v)= V^{-1}(\bar v)$. As a direct consequence, the ``ring road condition" \eqref{eq:Lsum} simply becomes $L= n  V^{-1}(\bar v)$.
    
\end{itemize}
\end{rmk}

\subsection{Traffic  around equilibrium flow}\label{sec-around}
\textcolor{black}{In this section we describe the linearized system around the equilibrium flows presented in the previous section}. Let a general traffic model \textcolor{black}{be given} by \eqref{model-ge}--\eqref{eq:cons}. \textcolor{black}{Let $(\bar{h}_{i},\bar v)$ an equilibrium flow. From the previous Section, $(\bar{h}_{i},\bar v)$ satisfies \eqref{eq:sta-mol}--\eqref{eq:Lsum}.} 
In particular, we remark that $(\bar{h}_{i},\bar v)$ is entirely parametrized by $\bar{v}$.\\

{\color{black}

By denoting the perturbation \textcolor{black}{in headway and velocity} as
\begin{equation}\label{eq:diffesystem}
    y_j(t)= h_j(t)- \bar h_j\textrm{ and } u_j(t)= v_j(t)- \bar v,
\end{equation}
the traffic flow characterized by
\eqref{model-ge}--\textcolor{black}{\eqref{eq:cons} can be written} in terms of $(y_j, u_j)$:
\begin{equation}\label{eq:gene-nonlinear}
\begin{cases}
 \dot y_j= u_{j+1}- u_j, \\
 \dot u_j= f_j(\bar h_j+ y_j, u_{j+1}- u_j, \bar v+ u_j),
\end{cases}
\end{equation}
and \eqref{eq:cons} becomes
\begin{equation}
\label{eq:new-cons}
    \sum_{j=1}^n y_j(t)= \textcolor{black}{\sum_{j=1}^n h_{j}(t)}- \sum_{j=1}^n \bar h_j=0.
\end{equation}
\textcolor{black}{This condition is equivalent to say} that the solutions of the preceding system always stay in the $(2n-1)$ dimensional subspace of $\mathbb{C}^{2}$:
\begin{equation}
\label{defH}
    \mathcal{H}:= \left\{(y_1, y_2,...,y_n,u_1,u_2,...,u_n)\in \mathbb{C}^{2n}: \sum_{k=1}^n y_k=0\right\},
\end{equation}
in particular, when the initial state takes value from $\mathcal{H}\cap \mathbb{R}^{2n}$, the solution also stays in  $\mathcal{H}\cap \mathbb{R}^{2n}$.  {\color{black}Here, we introduce complex valued spaces for the ease of notations when considering eigenvectors and eigenvalues. }

\textcolor{black}{Therefore the system \eqref{eq:gene-nonlinear}--\eqref{eq:new-cons} can be expressed as
\begin{equation}
\label{eq:new-nonlinear}
\begin{cases}
 \dot y_j= u_{j+1}- u_j, \\
 \dot u_j= f_j(\bar h_j+ y_j, u_{j+1}- u_j, \bar v+ u_j),\\
 (y_{i}(t),u_{j}(t))\in\mathcal{H}.
\end{cases}
\end{equation}}

{\bf Linearized traffic around equilibrium flow}

Next, standard linearization yields the linearized ring road traffic system 
\begin{equation}\label{eq:gene-linear}
\begin{cases}
 \dot y_j= u_{j+1}- u_j, \\
 \dot u_j= \alpha_j y_j- \beta_j u_j+ \gamma_j u_{j+1},\\
  \textcolor{black}{(y_{i}(t),u_{j}(t))\in\mathcal{H},}
\end{cases}
\end{equation} }
where $(\alpha_j, \beta_j, \gamma_j)$ (independent of time) are given by
\begin{equation}
 (\alpha_j, \beta_j, \gamma_j)= \left(\frac{\partial f_j}{\partial h}, \frac{\partial f_j}{\partial \dot{h}}- \frac{\partial f_j}{\partial v}, \frac{\partial f_j}{\partial \dot{h}}\right)(g_j(\bar v), 0, \bar v)
\end{equation}
 which satisfies, from \eqref{nat-con-f},
\begin{equation}\label{eq:com-sense}
\alpha_j>0, \;   \beta_j>\gamma_j>0.
\end{equation}

Form now on, for the ease of notations, we will denote by \textcolor{black}{$\Lambda_{i}$ the set of parameters describing the car following model $f_{i}$ and $\Delta_{j}$ the quantity describing its stability around the equilibrium flow.}  {\color{black}
\begin{defn}\label{def-delta}
For any given trio 
\begin{equation}\label{defLambda}
    \Lambda_j:= (\alpha_j, \beta_j, \gamma_j)
\end{equation}
we define its discriminant as
\begin{equation}
    \label{defdet}
    \Delta_j:= (\beta_{j})^2-(\gamma_{j})^2-2\alpha_{j}.
\end{equation}
\end{defn} }

\textcolor{black}{The expression of $\Delta_{j}$ comes from \cite{CuiSeibold2017} (see Proposition \ref{lem1}), it describes the stability of $f_{j}$ around an equilibrium flow in a single-phase traffic, i.e. when all cars have the same car-following model $f := f_{j}$. What we will see in the following is that $\Delta_{j}$ is also a good indicator of the stability in a multi-phase traffic. For this reason, we introduce the} following definition\textcolor{black}{,} inspired by Proposition \ref{lem1},
\begin{defn}\label{def-discri-delta}
  We  classify a trio, $\textcolor{black}{\Lambda :=} (\alpha, \beta, \gamma)\in \mathbb{R}^3$ satisfying the ``common sense" condition \eqref{eq-con-com-se}, {\color{black} by \textcolor{black}{the value of its} discriminant \textcolor{black}{as follows. We say that $\Lambda$ is} }
\begin{itemize}
    \item {\bf stable}, if $\textcolor{black}{\Delta  :=} \beta^2-\gamma^2- 2\alpha>0$.
     \item {\bf \textcolor{black}{critical}}, if 
     $\textcolor{black}{\Delta = 0}$.
      \item {\bf unstable}, if 
      $\textcolor{black}{\Delta < 0}$.
\end{itemize}
\end{defn}
Finally, for a general traffic model \eqref{model-ge} with $n$ cars satisfying the ring road condition \eqref{rmk-rrc}, \textcolor{black}{note that there is at most $n$ different set of parameters $\Lambda_j$ (one per car)}. However, if we  restrict it into  some $m$-phase mixed traffic model \eqref{ex:cd}, \textcolor{black}{where all the vehicles can be classified in $m$ different types, then, thanks to \eqref{e:111} --\eqref{eq:sta-mol},  $\Lambda_j$} only have at most $m$ different values.\\

Let us now denote
\begin{equation}
    z(t)= (y_1,..., y_n, u_1,... u_n)^{T}(t)\in \mathcal{H}
\end{equation}
the linearized  traffic system \eqref{eq:gene-linear} becomes
\begin{equation}\label{Cauchy-zn}
    \dot{z}(t)= M_n z(t),\;\;z(t)\in\mathcal{H},
\end{equation}
with
\begin{equation} 
M_n:=    \begin{pmatrix} O_n &A_n\\
C_n&B_n
\end{pmatrix}
\end{equation}
where
\begin{equation}\label{repre-matrix2}
 O_n:=    \begin{pmatrix} 0& & & & \\& 0& & &\\& & 0& & \\& & ...&... &\\ & & & & 0
\end{pmatrix}, \;  
 A_n:=    \begin{pmatrix} -1& 1& & & \\& -1& 1& &\\& & -1& 1& \\& & ...&... &\\ 1& & & & -1
\end{pmatrix}.
\end{equation}
\begin{equation}\label{repre-matrix}
 C_n:=    \begin{pmatrix} \alpha_1& & & & \\& \alpha_2& & &\\& & \alpha_3& & \\& & ...&... &\\ & & & & \alpha_n
\end{pmatrix}, \;  
 B_n:=    \begin{pmatrix} -\beta_1& \gamma_1& & & \\& -\beta_2& \gamma_2& &\\& & -\beta_3& \gamma_3& \\& & ...&... &\\ \gamma_n& & & & -\beta_n
\end{pmatrix}.
\end{equation}

The following definitions describe stability properties of both the linearized and nonlinear systems around equilibrium flows.  
\begin{defn}[Linearized stability around the equilibrium flow]\label{def:sta}
Let \textcolor{black}{us consider} 
an equilibrium flow on the ring road:  $\{(\bar h_j, \bar v)\}_{j=1}^n$.
The linearized ring road traffic \eqref{eq:gene-linear} is said to be exponentially stable around this equilibrium flow,  if there exist some $\lambda>0$ and $C>0$ such that  for any initial state, $z(0)= (y_1(0), ..., y_n(0),u_1(0),..., u_n(0))^{T}\in \mathcal{H}$, the solution of the Cauchy problem \eqref{Cauchy-zn} (by recalling \eqref{eq:gene-linear}--\eqref{repre-matrix}) satisfies
\begin{equation}
    |z(t)|\leq C e^{-\lambda t} |z(0)|, \forall t\in \mathbb{R}^{+}.
\end{equation}
\end{defn}
The preceding definition describes the stability of the linearized system \eqref{eq:gene-linear}. Actually, thanks to the standard linearization argument, when the linearized system is stable in the sense of Definition \ref{def:sta}, the original nonlinear system  \eqref{eq:gene-nonlinear} is automatically {\color{black}locally} stable in $\mathcal{H}$ in the following sense:
\begin{defn}[{\color{black}Local stability around the equilibrium flow: an alternative definition}]\label{def:stanon}
Let \textcolor{black}{us consider} an equilibrium flow of the ring road:  $\{(\bar h_j, \bar v)\}_{j=1}^n$.
The  ring road traffic \eqref{eq:new-nonlinear} is said to be locally exponentially stable around this equilibrium flow,  if there exist some $\varepsilon>0$, $\lambda>0$ and $C>0$ such that  for any initial state, $z(0)= (y_1(0), ..., y_n(0),u_1(0),..., u_n(0))^{T}\in \mathcal{H}$, satisfying
\begin{equation}
    |z(0)|\leq \varepsilon,
\end{equation}
the solution of the Cauchy problem \eqref{eq:gene-nonlinear} satisfies
\begin{equation}
    |z(t)|\leq C e^{-\lambda t} |z(0)|, \forall t\in \mathbb{R}^{+}.
\end{equation}
\end{defn}

Let us remark that  in the preceding two definitions, it is equivalent to express the stability in terms of position-velocity around the equilibrium flow, $\{(\bar x_j(t), \bar v)\}_{j=1}^n$, like Definition \ref{def:pvstab}.

\subsection{On the characterization of eigenvalues (counting multiplicity)}\label{sec-studyeigen}
Since the local exponential stability of the nonlinear system \eqref{eq:new-nonlinear} can be directly deduced from the exponential stability of the linearized system \eqref{Cauchy-zn} we are going to focus on the latter. Using Routh--Hurwitz criterion the exponential stability of the linearized system \eqref{Cauchy-zn} depends on the spectrum of the operator
\begin{equation}
\begin{split}
   \mathcal{L}\;:\;  &\mathcal{H}\rightarrow \mathcal{H}\\
    &x \mapsto M_{n}x.
    \end{split}
\end{equation}
Therefore we  investigate the spectrum of the matrix $M_{n}$ on $\mathcal{H}$.\\

In this section we provide {\color{black}a two-step approach} to find \textcolor{black}{the eigenvalues of the matrix $M_{n}$}. \textcolor{black}{In particular, t}he second approach gives a full description of the multiplicity of the eigenvalues. We first look at the spectrum of $M_{n}$ on $\mathbb{C}^{2n}$ and then see which eigenvalue remains when we restrict $M_{n}$ to $\mathcal{H}$.\\

{\bf The spectrum of $M_n$ \textcolor{black}{on $\mathbb{C}^{2n}$}.}

\textcolor{black}{A first method consists in considering the} eigen-pairs $(\omega, z)$ of the matrix $M_n$:
\begin{equation}
    M_n z= \omega z.
\end{equation}
{\color{black}
We know from the preceding equation that, for any $j\in \{1, 2,..., n\}$,
\begin{gather}
    \omega y_j= u_{j+1}- u_j, \\
    \omega u_j= \alpha_j y_{j}-\beta_j u_j+ \gamma_j u_{j+1},
\end{gather}
which further implies
\begin{equation}
    (\omega^2 +\beta_j\omega+ \alpha_j) u_j= (\gamma_j \omega+ \alpha_j) u_{j+1}.
\end{equation} }
Therefore 
\begin{equation}\label{eq:showeing}
    \prod_{j=1}^{n} F_j(\omega)=1, \;\; F_j(\omega):= \frac{\gamma_j \omega+ \alpha_j}{\omega^2 +\beta_j\omega+ \alpha_j},
\end{equation}
which algebraically provides $2n$ solutions (with multiplicity). However, this \textcolor{black}{approach} does not yet give any information on the multiplicity of the eigenvalues. Thus, we use a second method to obtain this information.\\

\textcolor{black}{We now use the} characteristic polynomial to find all eigenvalues (counting multiplicity) of the matrix $M_n$. We are going to \textcolor{black}{compute} {\color{black}
\begin{equation}
    \textcolor{black}{\chi(\lambda):=} \text{det} \begin{pmatrix} \lambda I_n&- A_n\\
-C_n& \lambda I_n- B_n
\end{pmatrix}
\end{equation}
Notice that $\lambda I_n C_n= C_n \lambda I_n$, we know that 
\begin{equation}
    \text{det} \begin{pmatrix} \lambda I_n&-A_n\\
-C_n& \lambda I_n- B_n
\end{pmatrix}
= \text{det} \left( (\lambda I_n)(\lambda I_n- B_n) - (- C_n)(- A_n)\right)= \text{det} (\lambda^2 I_n- \lambda B_n- C_n A_n)= \text{det}(\widetilde{M}_n),
\end{equation} }
where,  $\widetilde{M}_n$ is given by 
\begin{equation}
    \begin{pmatrix} \lambda^2 + \beta_1 \lambda+ \alpha_1& -\gamma_1 \lambda-\alpha_1& & & \\& \lambda^2 + \beta_2 \lambda+ \alpha_2& -\gamma_2 \lambda -\alpha_2& &\\& & \lambda^2 + \beta_3 \lambda+ \alpha_3& -\gamma_3 \lambda -\alpha_3& \\& &... &... &\\ -\gamma_n \lambda -\alpha_n& & & & \lambda^2 + \beta_n \lambda+ \alpha_n
\end{pmatrix}. 
\end{equation}
By considering the first column of the matrix, its determinant read as
\begin{align}
    \textcolor{black}{\chi(\lambda) =} det \widetilde{M}_n&= \prod_{i=1}^n (\lambda^2 + \beta_i \lambda+ \alpha_i)+ (-1)^{n-1} (-\gamma_n \lambda -\alpha_n) \prod_{j=1}^{n-1} (-\gamma_j \lambda -\alpha_j)  \notag\\
    &= \prod_{i=1}^n (\lambda^2 + \beta_i \lambda+ \alpha_i)- \prod_{j=1}^{n} (\gamma_j \lambda +\alpha_j). \label{eq:chara}
\end{align}
Therefore, all the eigenvalues are given by  
\begin{equation}\label{eq:eigenrepre}
      \prod_{j=1}^{n} \frac{\gamma_j \lambda+ \alpha_j}{\lambda^2 +\beta_j\lambda+ \alpha_j}\textcolor{black}{=1},
\end{equation}
which is coincident with \eqref{eq:showeing}.

\begin{rmk}[Independence of the stability with the order of the cars]
Similarly to \textcolor{black}{the length of the road for a given steady-state (see Remark \ref{rmk-rrc})}, again, we remark that the spectrum information of the traffic does not depend on the order of $\{f_j\}_{j=1}^n$. Namely, for a given desired steady velocity $\bar v$, if $\{\tilde f_j\}_{j=1}^n= \{f_j\}_{j=1}^n$ up to some permutations, then their stability around the related equilibrium flows coincide. \textcolor{black}{This means that, in the mixed-traffic setting, the stability only depends} on the penetration rate of  \textcolor{black}{the different types} of cars.
\end{rmk}

{\color{black}
{\bf \textcolor{black}{The spectrum of $M_{n}$ on $\mathcal{H}$}}

By looking at Equation \eqref{eq:eigenrepre}, it is easy to observe that $\lambda=0$ is an eigenvalue of the matrix $M_n$.
In the following, we prove that $\lambda=0$ is  a simple eigenvalue of the matrix $M_n$ acting on $\mathbb{C}^{2n}$ 
however, it is not a eigenvalue of $M_n$ acting on $\mathcal{H}$.}  \textcolor{black}{This is an important point as, otherwise, we would not be able to deduce the stability of the system \eqref{Cauchy-zn} from the eigenvalue analysis.}

\textcolor{black}{Let us start by showing that $\lambda=0$} is a simple eigenvalue of the matrix $M_n$. By comparing  the coefficients of the  characteristic polynomial \textcolor{black}{$\chi(\lambda)$ given by \eqref{eq:chara}, we immediately notice that 
$\chi(0)=0$. Then it suffices to show that $\chi'(0)\neq 0$.
Indeed, suppose that $\lambda=0$ has (at least) multiplicity two, then the characteristic polynomial can be written as $\chi(\lambda) = \lambda^{2}P(\lambda)$ where $P$ is again a polynomial, and consequently $\chi'(0) = 0$. Proving that $\chi'(0)\neq 0$} is equivalent to prove that 
\begin{equation}
    \sum_{i=1}^n \beta_i\left(\frac{\prod_{k=1}^n \alpha_k}{\alpha_i}\right)\neq  \sum_{i=1}^n \gamma_i\left(\frac{\prod_{k=1}^n \alpha_k}{\alpha_i}\right),
\end{equation}
which is guaranteed by 
the ``common sense" condition \eqref{eq:com-sense}.\\

{\color{black}
Next, we show that even though 0 is a simple eigenvalue of the matrix $M_n$, it is not included in the finite spectrum of the operator $M_n$ acting on $\mathcal{H}$. Indeed, suppose \textcolor{black}{by contradiction} that there exists $z= (y_1,..., y_n, u_1,..., u_n)^{T}\in \mathcal{H}$ \textcolor{black}{such that $z$ is an eigenvector of $M_{n}$ associated to the eigenvalue $0$. W}e have 
\begin{equation}
    M_n z= 0.
\end{equation}
\textcolor{black}{As $z\neq 0$ and using \eqref{repre-matrix}--\eqref{repre-matrix2} we deduce that} there exists some $C\neq 0$ such that for any $j\in \{1,2,...,n\}$,
\begin{equation}
    u_j= C, \; y_j= \frac{\beta_j- \gamma_j}{\alpha_j} C.
\end{equation}
Without loss of generality, we assume that $C>0$. By recalling the ``common sense" condition \eqref{eq:com-sense}, this implies that
\begin{equation}
    \sum\limits_{j=1}^{n}y_{j} > 0,
\end{equation}
but as $z\in\mathcal{H}$ we know from \eqref{defH} that
\begin{equation}
    \sum\limits_{j=1}^{n}y_{j} = 0,
\end{equation}
which leads to a contradiction.
} \textcolor{black}{This implies that
\begin{equation}
    \text{Sp}_{\mathcal{H}}(M_{n}) \subseteq \text{Sp}_{\mathbb{C}^{2n}}(M_{n})\setminus\{0\}
    ,
\end{equation}
where $\text{Sp}_{\mathcal{H}}(M_{n})$ is the spectrum of $M_{n}$ on $\mathcal{H}$ and $\text{Sp}_{\mathbb{C}^{2n}}(M_{n})$ is the spectrum of $M_{n}$ on $\mathbb{C}^{2n}$.
On the other hand, for any $\lambda\in \text{Sp}_{\mathbb{C}^{2n}}(M_{n})\setminus\{0\}$ we can find at least one related  eigenvector $z\in \mathbb{C}^{2n}$. It is clear that $M_n z\in \mathcal{H}$, thus $z\in \mathcal{H}$. Therefore,
\begin{equation}\label{spectrum}
    \text{Sp}_{\mathcal{H}}(M_{n}) = \text{Sp}_{\mathbb{C}^{2n}}(M_{n})\setminus\{0\}.
\end{equation}
}

\section{Two-phase traffic flow}\label{sec2pase}
In this section, we study the stability of the equilibrium flows in a two-phase traffic flow. This situation represents for instance two class of vehicles such as trucks and cars, or also the coexistence of vehicles with and without a collaborative driving behavior.  In the following,  these two classes of vehicles will be called {\it Type 1 vehicle} and {\it Type 2 vehicle}. Let $\bar{v}$ be an equilibrium velocity, from \eqref{eq:sta-mol} this imposes the equilibrium headway $\bar h_1$ ($resp.$ $\bar h_2$) of the {\it Type 1 vehicles} ($resp.$ {\it Type 2 vehicles}). Thus, we are looking at a situation where, for every $j\in \{1, 2,..., n\}$, using the notation \eqref{defLambda}--\eqref{defdet}, 
\begin{equation}
    \Lambda_j= (\alpha_j, \beta_j, \gamma_j) \in \{\Lambda^1, \Lambda^2\}= \{(\alpha^1, \beta^1, \gamma^1), (\alpha^2, \beta^2, \gamma^2)\}.
\end{equation}

Let us denote by $n_{1}$ the number of {\it Type 1 vehicles}  with parameters $\Lambda^{1}$ and $n_{2}$ the number of {\it Type 2 vehicles} with parameters $\Lambda^{2}$ such that the total number of vehicles is $n=n_1+n_2$.\\

Suppose that  the mixed traffic on   road is represented by the ``ordering" $ (a_1, a_2,..., a_n)$ that belongs to
\begin{equation}\label{ordering}
   \mathcal{K}:= \left\{ (a_1, a_2,..., a_n): a_k\in \{1, 2\} \, \forall 1\leq k\leq n,  \sum_{k=1}^n a_k= n_1+ 2n_2 \right\},
\end{equation}
where $a_k\in \{1, 2\}$ implies that  the $k$-th vehicle on the road is  of {\it Type $a_k$}: because there is no lane changing in a  single ring road, $(a_1, a_2,..., a_n)$ is invariant with respect to time.  Consequently, there is a unique equilibrium flow corresponding to $\bar{v}$ \footnote{or equivalently there is a unique equilibrium flow corresponding to $L$, from Remark \ref{rmk-rrc} given the ring-road condition}: the headway before the $k$-th vehicle is given by $\bar h_{a_k}$. Furthermore, from Section  \ref{sec-around}, the linearized system around this equilibrium flow is 
\begin{gather}
\begin{cases}
 \dot y_j= u_{j+1}- u_j, \\
 \dot u_j= \alpha_j y_j- \beta_j u_j+ \gamma_j u_{j+1},\\
  (\alpha_j, \beta_j, \gamma_j)=  (\alpha^{a_j}, \beta^{a_j}, \gamma^{a_j}), \label{eq:54}\\
  (y_{j}(t),u_{j}(t))_{j\in\{1,...,n\}}\in \mathcal{H},
\end{cases}
\end{gather}
which can be further represented in forms of \eqref{Cauchy-zn}--\eqref{repre-matrix}.
\\

We introduce the following Lemma
\begin{lem}\label{lem:cre}
Let given $(n_1, n_2)\in \mathbb{N}^2$.
 If the following inequality holds,
\begin{equation}\label{eq:keyor}
    \left(\frac{(\alpha^1)^2+ (\gamma^1)^2 x^2}{(\alpha^1)^2+ ((\beta^1)^2- 2\alpha^1) x^2+ x^4}\right)^{n_1} \left(\frac{(\alpha^2)^2+ (\gamma^2)^2 x^2}{(\alpha^2)^2+ ((\beta^2)^2- 2\alpha^2) x^2+ x^4}\right)^{n_2} < 1, \forall x\in \mathbb{R}\setminus\{0\},
\end{equation}
then  System \eqref{ordering}--\eqref{eq:54} is exponentially stable in the sense of Definition \ref{def:sta}.

On the other hand, if for some $x\in \mathbb{R}$ we have
\begin{equation}\label{eq:keyorine}
    \left(\frac{(\alpha^1)^2+ (\gamma^1)^2 x^2}{(\alpha^1)^2+ ((\beta^1)^2- 2\alpha^1) x^2+ x^4}\right)^{n_1} \left(\frac{(\alpha^2)^2+ (\gamma^2)^2 x^2}{(\alpha^2)^2+ ((\beta^2)^2- 2\alpha^2) x^2+ x^4}\right)^{n_2} > 1,
\end{equation}
then there exists some  $M_0\in \mathbb{N}$ effectively computable such that for any integer $M\geq M_0$, the System \eqref{ordering}--\eqref{eq:54} with $(n_1, n_2)$ being replaced by $(M n_1, Mn_2)$ is unstable. 
\end{lem}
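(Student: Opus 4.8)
The plan is to connect the stability of the linearized system \eqref{ordering}--\eqref{eq:54} to the location of the roots of the characteristic equation \eqref{eq:eigenrepre}, and then to rewrite that equation so that the hypotheses \eqref{eq:keyor}--\eqref{eq:keyorine} become exactly the conditions ruling the sign of the real part of the roots. By the discussion in Section \ref{sec-studyeigen}, and in particular by \eqref{spectrum}, the system is exponentially stable in the sense of Definition \ref{def:sta} if and only if every $\lambda\in\text{Sp}_{\mathbb{C}^{2n}}(M_n)\setminus\{0\}$ satisfies $\mathrm{Re}(\lambda)<0$; equivalently, by \eqref{eq:eigenrepre}, every root $\lambda\neq 0$ of $\prod_{j=1}^n F_j(\lambda)=1$ has negative real part, where $F_j$ is defined in \eqref{eq:showeing}. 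In the two-phase setting $F_j$ takes only two values $F^1$ and $F^2$, repeated $n_1$ and $n_2$ times respectively, so the equation is $(F^1(\lambda))^{n_1}(F^2(\lambda))^{n_2}=1$.

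The key algebraic step is the following standard argument for this type of "$\prod F_j=1$" equation (it is exactly the device used in \cite{CuiSeibold2017} for the unified model). Suppose $\lambda$ is a root with $\mathrm{Re}(\lambda)\ge 0$, $\lambda\neq 0$. One first checks that $\mathrm{Re}(\lambda)=0$ forces $\lambda = ix$ with $x\neq 0$ (the point $\lambda=0$ is excluded, and $\lambda$ purely imaginary is the boundary case). Then one shows that $|F^i(ix)|^2 = \dfrac{(\alpha^i)^2+(\gamma^i)^2 x^2}{(\alpha^i)^2+((\beta^i)^2-2\alpha^i)x^2+x^4}$ — this is a direct computation from $F^i(ix)=\dfrac{\gamma^i (ix)+\alpha^i}{-x^2+\beta^i (ix)+\alpha^i}$, taking the modulus squared of numerator and denominator. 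For the stability direction, \eqref{eq:keyor} says exactly that $|F^1(ix)|^{2n_1}|F^2(ix)|^{2n_2}<1$ for all real $x\neq 0$, hence no purely imaginary nonzero root exists; a continuity/degree argument (tracking the roots as a function of a homotopy parameter, or invoking a Hermite–Biehler / argument-principle count as in \cite{CuiSeibold2017}) then shows there is no root in the closed right half-plane at all, which combined with \eqref{spectrum} gives exponential stability. For the instability direction, \eqref{eq:keyorine} gives some $x_0$ with $|F^1(ix_0)|^{2Mn_1}|F^2(ix_0)|^{2Mn_2}>1$ for $M=1$, hence for all $M\ge 1$; then one argues that along the imaginary axis the quantity $|F^1(ix)|^{2}|F^2(ix)|^{2}$ evaluated at appropriate points, raised to the power $Mn_i$, crosses $1$, and a standard winding-number / intermediate-value argument produces, for $M$ large enough, a genuine root of $(F^1)^{Mn_1}(F^2)^{Mn_2}=1$ with strictly positive real part — giving an effectively computable $M_0$ (the threshold is where the relevant argument increment forces an extra root to cross into $\{\mathrm{Re}>0\}$).

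The main obstacle is the passage from "no purely imaginary root" to "no root with nonnegative real part" (and its instability counterpart): merely excluding the imaginary axis is not enough without a global count of how many roots lie in the right half-plane. The clean way is to extend the modulus computation off the imaginary axis — show that for $\mathrm{Re}(\lambda)\ge 0$ one has $|F^i(\lambda)|\le |F^i(i\,\mathrm{Im}\,\lambda)|$ or some comparable monotonicity in $\mathrm{Re}(\lambda)$, so that $\prod |F_j(\lambda)|<1$ persists throughout the closed right half-plane and hence $\prod F_j(\lambda)=1$ is impossible there. If such a pointwise bound is not available in this generality, the fallback is the argument-principle count used in \cite{CuiSeibold2017}: the number of right-half-plane roots of $\prod(\lambda^2+\beta_j\lambda+\alpha_j)-\prod(\gamma_j\lambda+\alpha_j)$ is computed by the variation of the argument along the imaginary axis, and \eqref{eq:keyor} is precisely what makes this count zero. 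For the instability part the same count, with \eqref{eq:keyorine} and the power $M$ amplifying the relevant modulus past $1$, shows the count becomes positive once $M\ge M_0$, and inspecting the argument variation gives the explicit $M_0$.
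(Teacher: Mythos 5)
Your overall strategy matches the paper's: reduce to the characteristic equation $(F^1(\lambda))^{n_1}(F^2(\lambda))^{n_2}=1$, compute $|F^i(ix)|^2$ on the imaginary axis (your computation is correct and is exactly the paper's), and use \eqref{spectrum} to discard $\lambda=0$. However, both of the steps you yourself flag as ``the main obstacle'' are left open, and in each case the paper's resolution differs from what you sketch. For the stability direction, the paper does not prove a per-factor vertical monotonicity $|F^i(\lambda)|\le |F^i(i\,\mathrm{Im}\,\lambda)|$ (which is neither established nor needed), nor an argument-principle count; it simply observes that the product $G(z)=(F^1(z))^{n_1}(F^2(z))^{n_2}$ is holomorphic on the closed right half-plane (all poles of the $F^i$ lie in $\{\Re(z)<0\}$ because $\alpha^i,\beta^i>0$) and tends to $0$ as $|z|\to\infty$, so by the maximum modulus principle $\sup_{\Re(z)\ge 0}|G|$ is attained on the imaginary axis and hence is $\le 1$, with $|G|=1$ only at $z=0$. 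Consequently no nonzero root lies in the closed right half-plane. This one-line application of the maximum principle to the \emph{product} is the ingredient missing from your write-up.

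For the instability direction your account misidentifies the role of $M$: the modulus at $ix_0$ already exceeds $1$ for $M=1$, so nothing needs ``amplifying''; the difficulty is that a point with $|G|=1$ and $\Re(z)>0$ need not satisfy $G=1$ — the \emph{phase} must also match. The paper's construction is: set $\mu=n_1/n_2$ and $G_\mu=(F^1)^{\mu}F^2$; by \eqref{eq:keyorine} together with $|G_\mu|\to 0$ at infinity and continuity, the level set $\mathcal{C}^{+}=\{z:\ |G_\mu(z)|=1,\ \Re(z)>0\}$ is nonempty, and its image under $G_\mu$ contains an arc $\mathcal{O}$ of the unit circle of positive length. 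Choosing $M_0=\left[2\pi/|\mathcal{O}|\right]+1$ guarantees that for every $M\ge M_0$ some $M$-th root of unity lies in $\mathcal{O}$, i.e.\ $G_\mu(z_0)=e^{2ik\pi/M}$ for some $z_0$ with $\Re(z_0)>0$, whence $G_M(z_0)=G_\mu(z_0)^{M}=1$ and the system with $(Mn_1,Mn_2)$ has an eigenvalue in the open right half-plane. Your ``winding-number / intermediate-value argument'' points in this direction but does not produce the effectively computable $M_0$ that the statement requires; you would need to supply this level-curve/arc construction (or an equivalent phase-matching argument) to close the proof.
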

\begin{proof}
This proof is essentially the same as the one given by  \cite[Section II]{CuiSeibold2017} in a {\it unified models} framework (namely $n_2= 0$). For readers' convenience we sketch its proof as follows.\\
By representing System \eqref{ordering}--\eqref{eq:54} in  form of \eqref{Cauchy-zn}--\eqref{repre-matrix}, thank to Section \ref{sec-studyeigen}, the eigenvalues (counting multiplicity) are explicitely characterized  by \eqref{eq:eigenrepre}:
\begin{equation}
    \left(\frac{\gamma^1 \lambda+ \alpha^1}{\lambda^2 +\beta^1\lambda+ \alpha^1}\right)^{n_1}   \left(\frac{\gamma^2 \lambda+ \alpha^2}{\lambda^2 +\beta^2\lambda+ \alpha^2}\right)^{n_2}=1.
\end{equation}
Inspired by \cite[Section II]{CuiSeibold2017}, we consider the following meromorphic function
\begin{equation}
  G(z)=  \left(\frac{\gamma^1 z+ \alpha^1}{z^2 +\beta^1 z+ \alpha^1}\right)^{n_1}   \left(\frac{\gamma^2 z+ \alpha^2}{z^2 +\beta^2 z+ \alpha^2}\right)^{n_2}, \;  \forall z\in \mathbb{C}.
\end{equation}
Since all the poles are located on the left half plane, $G(z)$ is holomorphic on the right half plane $\mathbb{C}^{+}= \{z\in \mathbb{C}: \Re(z)\geq 0\}$. We notice that $|G(z)|$ tends to 0 as $|z|$ tends to $+\infty$. Then, thanks to the maximum principle of holomorphic functions, the maximum of $|G(z)|$ over $\mathbb{C}^{+}$ must takes place at the imaginary axis. 
By considering $z= ix$ we get 
\begin{equation}
    |G(z)|^2=   \left(\frac{(\alpha^1)^2+ (\gamma^1)^2 x^2}{(\alpha^1)^2+ ((\beta^1)^2- 2\alpha^1) x^2+ x^4}\right)^{n_1} \left(\frac{(\alpha^2)^2+ (\gamma^2)^2 x^2}{(\alpha^2)^2+ ((\beta^2)^2- 2\alpha^2) x^2+ x^4}\right)^{n_2},
\end{equation}
which explains the inequality \eqref{eq:keyor}.

If Condition \eqref{eq:keyor} is satisfied, then we know that $|G(z)|\leq 1$ in $\mathbb{C}^{+}$ with $|G(z)|$ equals to 1 only at $z=0$. Therefore, all the eigenvalues are located in $\{z\in \mathbb{C}: \Re(z)<0\}\cup \{0\}$, which, to be combined with \eqref{spectrum}, yields the required exponential stability. \\

On the other hand, if  Condition \eqref{eq:keyorine}  is satisfied. As $n_{1}+n_{2}\neq0$, without loss of generality we assume that $n_{2}\neq 0$ and we define $\mu = n_{1}/n_{2}$. Condition \eqref{eq:keyorine} implies
\begin{equation}\label{eq:keyorine-2}
    \left(\frac{(\alpha^1)^2+ (\gamma^1)^2 x^2}{(\alpha^1)^2+ ((\beta^1)^2- 2\alpha^1) x^2+ x^4}\right)^{\mu} \left(\frac{(\alpha^2)^2+ (\gamma^2)^2 x^2}{(\alpha^2)^2+ ((\beta^2)^2- 2\alpha^2) x^2+ x^4}\right) > 1.
\end{equation}
We denote 
\begin{equation}
G_{\mu}(z)= \left(\frac{\gamma^1 z+ \alpha^1}{z^2 +\beta^1 z+ \alpha^1}\right)^{\mu}   \left(\frac{\gamma^2 z+ \alpha^2}{z^2 +\beta^2 z+ \alpha^2}\right), \;  \forall z\in \mathbb{C}.
\end{equation}

we consider the curve $\mathcal{C}\subset \mathbb{C}$:
\begin{equation}
    \mathcal{C}:= \{z\in \mathbb{C}: |\textcolor{black}{G_{\mu}}(z)|=1\}.
\end{equation}
Let us  further define an open subset of  $\mathcal{C}$ by 
\begin{equation}
    \mathcal{C}^{+}:=\{z\in \mathcal{C}: \Re(z)>0\},
\end{equation}
which is not empty thanks to Condition \eqref{eq:keyorine-2}, the fact that $\lim_{\text{Re}(z)\rightarrow+\infty} |G_{\mu}(z)|=0$ and the continuity of $G_{\mu}$.
It is natural to consider the continuous function on $\mathcal{C}^{+}$ defined as 
\begin{equation}
\begin{split}
  G_1  \;:\; & \mathcal{C}^{+}\rightarrow \mathbb{S}^1\\
  &z \mapsto G_1(z):= \textcolor{black}{G_{\mu}}(z).
    \end{split}
\end{equation}
We can find a connected open  set $\mathcal{O}\subset \mathbb{S}^1$ such that \begin{equation}
    \mathcal{O}\subset G_1(\mathcal{C}^+).
\end{equation}

By denoting the length of $\mathcal{O}$ as $|\mathcal{O}|$, the value of $M_0$ can be chosen as
\begin{equation}
    M_0:= \left[\frac{2\pi}{|\mathcal{O}|}\right]+ 1.
\end{equation}
Indeed, for any $M\geq M_0$, we know from the definition of $M$ that 
\begin{equation}
    \frac{2\pi}{M}< |\mathcal{O}|.
\end{equation}
Therefore, the set
\begin{equation}
    \{e^{2k\pi/ M}: k=1,2,...,M\}\cap \mathcal{O}
\end{equation}
is not empty. We assume that for some $k$, the point $e^{2k\pi/M}$ belongs to $\mathcal{O}$. Thus, by the definition of $\mathcal{O}$ there exists some $z_0\in \mathcal{C}^+$ such that
\begin{equation}
    G_{\mu}(z_0)= G_1(z_0)= e^{2k\pi/M}.
\end{equation}
Meanwhile, we recall that for the ring road traffic with $(M \mu)$ {\it Type 1 vehicles} and $M$ {\it Type 2 vehicles}  the stability of the System \eqref{ordering}--\eqref{eq:54} is determined by the solutions of $G_M(z)=1$:
\begin{equation}\label{eq:keyorineend}
    G_M(z)=  \left(\frac{\gamma^1 z+ \alpha^1}{z^2 +\beta^1 z+ \alpha^1}\right)^{M \mu}   \left(\frac{\gamma^2 z+ \alpha^2}{z^2 +\beta^2 z+ \alpha^2}\right)^{M}.
\end{equation}
The preceding equations immediately yield $G_M(z_0)=1$ with $\Re(z_0)> 0$:  the system is unstable.
\end{proof}

This lemma allows us to show the following theorems:

\begin{thm} 
\label{th11}
Let given $\bar v>0$.
If $\Delta^1\geq 0$ and $\Delta^2\geq 0$,  then for any $(n_1, n_2)\in \mathbb{N}^2$ and any ordering of the vehicles on the road, the ring road traffic system \eqref{eq:new-nonlinear} is locally exponentially stable around the equilibrium flow.
\end{thm}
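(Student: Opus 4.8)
The plan is to invoke Lemma \ref{lem:cre} and reduce Theorem \ref{th11} to verifying inequality \eqref{eq:keyor} under the hypotheses $\Delta^1\geq 0$ and $\Delta^2\geq 0$. By Lemma \ref{lem:cre}, exponential stability in the sense of Definition \ref{def:sta} follows as soon as
\begin{equation*}
\left(\frac{(\alpha^1)^2+ (\gamma^1)^2 x^2}{(\alpha^1)^2+ ((\beta^1)^2- 2\alpha^1) x^2+ x^4}\right)^{n_1} \left(\frac{(\alpha^2)^2+ (\gamma^2)^2 x^2}{(\alpha^2)^2+ ((\beta^2)^2- 2\alpha^2) x^2+ x^4}\right)^{n_2} < 1
\end{equation*}
for all $x\in\mathbb{R}\setminus\{0\}$; and then, by the standard linearization argument recorded after Definition \ref{def:sta}, the nonlinear system \eqref{eq:new-nonlinear} is locally exponentially stable around the equilibrium flow. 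Since $n_1,n_2\geq 0$ are arbitrary, it suffices to prove that \emph{each} factor is $\leq 1$ for all real $x$, with at least one strict inequality when $x\neq 0$ — in fact it is cleanest to show each factor is $\leq 1$, strictly for $x\neq 0$, as long as the corresponding discriminant is $\geq 0$. The degenerate cases where $n_1=0$ or $n_2=0$ are then covered directly (and recover Theorem \ref{lem1}).

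The key computation is a single-factor estimate: for a trio $\Lambda=(\alpha,\beta,\gamma)$ satisfying the common-sense condition \eqref{eq:com-sense} with $\Delta=\beta^2-\gamma^2-2\alpha\geq 0$, one has
\begin{equation*}
\frac{\alpha^2+ \gamma^2 x^2}{\alpha^2+ (\beta^2- 2\alpha) x^2+ x^4}\leq 1,\qquad \forall x\in\mathbb{R},
\end{equation*}
with strict inequality for $x\neq 0$. Clearing the (positive, since $\beta>\gamma>0$ forces $\beta^2-2\alpha$ could be negative but the quartic $x^4+(\beta^2-2\alpha)x^2+\alpha^2$ has no real root — this needs a quick check that its discriminant in $x^2$, namely $(\beta^2-2\alpha)^2-4\alpha^2=\beta^2(\beta^2-4\alpha)$, being negative, or the constant term $\alpha^2>0$ together with... actually one argues: the denominator equals $|x^2+\beta x+\alpha|^2 \cdot(\text{something})$? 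No — simply note it equals the product $(x^2+\beta x+\alpha)(x^2-\beta x+\alpha)$ evaluated appropriately, i.e. $|{-x^2+\alpha+i\beta x}|$... the clean statement is that it is $\lvert (ix)^2+\beta(ix)+\alpha\rvert^2$, which is strictly positive whenever $x\ne 0$ or $\alpha\ne 0$) denominator reduces the inequality to
\begin{equation*}
\alpha^2+\gamma^2 x^2 \leq \alpha^2 + (\beta^2-2\alpha)x^2 + x^4,
\end{equation*}
i.e. to $x^2\bigl(x^2 + (\beta^2-2\alpha-\gamma^2)\bigr)\geq 0$, i.e. to $x^2(x^2+\Delta)\geq 0$. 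When $\Delta\geq 0$ this is manifestly true, and strict for $x\neq 0$. This is exactly the place where the hypothesis enters, and it is the heart of the argument — everything else is packaging.

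Assembling: applying this estimate with $\Lambda=\Lambda^1$ and with $\Lambda=\Lambda^2$ gives that both factors in \eqref{eq:keyor} are $\leq 1$ and strictly $<1$ for $x\neq 0$ (when the corresponding exponent is positive); hence the product is $<1$ for every $x\in\mathbb{R}\setminus\{0\}$ whenever $n_1+n_2\geq 1$, and Lemma \ref{lem:cre} delivers exponential stability of the linearized system \eqref{eq:gene-linear}, independently of the ordering $(a_1,\dots,a_n)\in\mathcal{K}$ since \eqref{eq:keyor} does not see the order. Finally, the remark following Definition \ref{def:sta} upgrades linearized exponential stability to local exponential stability of the nonlinear ring-road system \eqref{eq:new-nonlinear} around the equilibrium flow, which is the assertion of the theorem. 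I do not anticipate a genuine obstacle here; the only point demanding a little care is confirming the denominator $\alpha^2+(\beta^2-2\alpha)x^2+x^4$ is strictly positive for all real $x$ (so that "clearing it" preserves the inequality direction), which follows since it equals $\lvert \alpha - x^2 + i\beta x\rvert^2 = (\alpha-x^2)^2+\beta^2 x^2$ and this vanishes only when $x=0$ and $\alpha=0$, excluded by $\alpha>0$.
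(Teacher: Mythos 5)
Your proposal is correct and follows essentially the same route as the paper: reduce to inequality \eqref{eq:keyor} via Lemma \ref{lem:cre} and show that each factor is $\leq 1$ (strictly for $x\neq 0$) whenever the corresponding discriminant is nonnegative. Your verification of the single-factor bound by the direct identity ``denominator minus numerator $=x^2(x^2+\Delta)$'' is an equivalent, if anything cleaner, substitute for the paper's monotonicity analysis of $h_i$, and your observation that the denominator equals $(\alpha-x^2)^2+\beta^2x^2>0$ correctly justifies clearing it.
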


\begin{rmk}
Note that, in the critical case, i.e. one or both of  $\Delta^{1}$ and $\Delta^{2}$ equals to 0, the system is still exponentially stable.
\end{rmk}

\begin{thm}
\label{th2}
Let given $\bar v>0$.
 We assume that $\Delta^1\geq 0$ and $\Delta^2<0$.
\begin{itemize}
      \item[(1)] If $\Delta^{1}> 0$ then there exists some effectively computable threshold constant $\tau_0\in (0, 1)$ depending on $\Lambda^{1}$ and $\Lambda^{2}$ and given by \eqref{eq:deftau0} such that for any pair $(n_1, n_2)\in \mathbb{N}^2$ verifying
\begin{equation}
    \frac{n_1}{n_1+ n_2} >  \tau_0, 
\end{equation}
the inequality \eqref{eq:keyor} is satisfied. In other words, for any ordering of the vehicles on the road $(a_1,a_2,..., a_n)\in \mathcal{K}$, the ring road traffic system \eqref{eq:new-nonlinear} is locally exponentially stable around the equilibrium flow associated to $\bar{v}$.
 \\

On the other hand, for any penetration rate
\begin{equation}
\tau := \frac{n_{1}}{n_{1}+n_{2}}    <\tau_{0}
\end{equation}
there exists $M>0$ such that for any ${n_1, n_2}\in \mathbb{N}^2$ satisfying
$n_1+n_2>M$,
the ring road traffic  system \eqref{model-ge}--\eqref{eq:cons} is unstable around the  equilibrium flow $(\bar{h}_{i},\bar{v})_{i\in\{1,2\}}$.

       \item[(2)]
       If $\Delta^1=0$ (namely, $\Lambda^1$ is critical), then for any penetration rate $\tau = n_{1}/(n_{1}+n_{2})$ there exists $M>0$ effectively computable such that if $n>M$ the ring road traffic system \eqref{eq:new-nonlinear} is unstable around the equilibrium flow associated to $\bar{v}$.
\end{itemize}
\end{thm}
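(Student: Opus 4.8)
The plan is to derive everything from Lemma \ref{lem:cre} by a one-variable analysis of the scalar function appearing in \eqref{eq:keyor}. Substituting $y = x^2 \in (0,\infty)$ and taking logarithms, condition \eqref{eq:keyor} becomes ``$n_1 H_1(y) + n_2 H_2(y) < 0$ for all $y > 0$'' and condition \eqref{eq:keyorine} becomes ``$n_1 H_1(y) + n_2 H_2(y) > 0$ for some $y > 0$'', where $H_1, H_2$ are exactly the functions defined in \eqref{eq:deftau0}. Thus the whole theorem reduces to controlling the sign of $y \mapsto n_1 H_1(y) + n_2 H_2(y)$ and invoking Lemma \ref{lem:cre} (for the stable conclusion, together with the standard linearization argument, and recalling from Section \ref{sec-studyeigen} that the spectrum, hence stability, does not depend on the ordering of the cars).

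First I would collect elementary facts. Writing $H_i = \log \phi_i$ with $\phi_i(y) = \frac{(\alpha^i)^2 + (\gamma^i)^2 y}{(\alpha^i)^2 + ((\beta^i)^2 - 2\alpha^i) y + y^2}$, the numerator of $\phi_i'$ is $-(\gamma^i)^2 y^2 - 2(\alpha^i)^2 y - (\alpha^i)^2 \Delta^i$, and the denominator of $\phi_i$ minus its numerator is $y\big(y + \Delta^i\big)$. Hence: since $\Delta^1 > 0$, $\phi_1$ is strictly decreasing on $(0,\infty)$ with $\phi_1(0) = 1$, so $H_1 < 0$ and $-H_1$ is strictly increasing on $(0,\infty)$; since $\Delta^2 < 0$, one has $H_2 > 0$ exactly on $(0, -\Delta^2)$ and $H_2 \le 0$ elsewhere, and $\phi_2$ (hence $H_2$) has a unique critical point on $(0,\infty)$ — a maximum — located precisely at $y = \Gamma^2 \in (0, -\Delta^2)$, the positive root of $(\gamma^2)^2 y^2 + 2(\alpha^2)^2 y + (\alpha^2)^2 \Delta^2 = 0$. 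I would also record the expansions near $y = 0$: $H_i(y) = -\frac{\Delta^i}{(\alpha^i)^2} y + O(y^2)$ if $\Delta^i \ne 0$, and $H_1(y) = -\frac{1}{(\alpha^1)^2} y^2 + O(y^3)$ if $\Delta^1 = 0$.

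For part (1) ($\Delta^1 > 0$, $n_1 \ge 1$): on $\{y : H_2(y) \le 0\}$ the sum $n_1 H_1 + n_2 H_2$ is automatically negative, so \eqref{eq:keyor} is equivalent to $\frac{n_1}{n_2} > \sup_{y \in (0, -\Delta^2)} \big(-H_2(y)/H_1(y)\big)$. Here is where $\Gamma^2$ enters: on $(\Gamma^2, -\Delta^2)$, $H_2$ is positive and strictly decreasing while $-H_1$ is positive and strictly increasing, so the ratio is strictly decreasing there, and hence the supremum is attained on $(0, \Gamma^2]$; moreover the ratio extends continuously to $y = 0$ with value $\frac{-\Delta^2(\alpha^1)^2}{\Delta^1(\alpha^2)^2} > 0$ by the expansions above, so $R := \max_{y \in (0, \Gamma^2]}\big(-H_2(y)/H_1(y)\big)$ is finite, positive, and attained. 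Setting $\tau_0 := \frac{R}{1+R} = 1 - (1+R)^{-1}$ reproduces \eqref{eq:deftau0} and yields $\tau_0 \in (0,1)$. Since $\tau > \tau_0 \iff n_1/n_2 > R$, this forces $n_1 H_1(y) + n_2 H_2(y) < 0$ for all $y > 0$, i.e. \eqref{eq:keyor}, and Lemma \ref{lem:cre} gives local exponential stability; conversely $\tau < \tau_0 \iff n_1/n_2 < R$, so at the maximizer $y_*$ one gets $n_1 H_1(y_*) + n_2 H_2(y_*) > 0$, i.e. \eqref{eq:keyorine}, and the instability half of Lemma \ref{lem:cre} produces an explicit $M_0$ such that every integer multiple $(M n_1, M n_2)$ with $M \ge M_0$ is unstable — the asserted instability for $n_1 + n_2$ large at fixed penetration rate. (The cases $n_2 = 0$ and $n_1 = 0$ are immediate: the former is the stable unified model with $\tau = 1 > \tau_0$; for the latter, $\phi_2^{n_2} > 1$ on $(0, -\Delta^2)$ gives \eqref{eq:keyorine} directly.)

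For part (2) ($\Delta^1 = 0$): the expansions give $n_1 H_1(y) + n_2 H_2(y) = n_2 \frac{-\Delta^2}{(\alpha^2)^2} y + O(y^2)$ as $y \to 0^+$, whose linear coefficient is strictly positive for any $n_2 \ge 1$; hence \eqref{eq:keyorine} holds with $x = \sqrt{y}$ for small $y > 0$, regardless of the penetration rate, and the instability half of Lemma \ref{lem:cre} gives instability once $n$ exceeds an effectively computable threshold. The only genuinely non-routine point of this scheme is the step in the previous paragraph establishing that the supremum of $-H_2/H_1$ over $(0, -\Delta^2)$ is attained on $(0, \Gamma^2]$ — and therefore that the closed form of $\tau_0$ in \eqref{eq:deftau0} is correct — which requires combining the two monotonicity facts ($-H_1$ increasing throughout $(0,\infty)$, $H_2$ decreasing beyond its unique maximum $\Gamma^2$) with the finite limit of the ratio at $y = 0$; the derivative computations for $\phi_i$ and the identification of $\Gamma^2$ as the maximizer of $\phi_2$ are routine algebra.
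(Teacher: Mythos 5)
Your proposal is correct and follows the same overall architecture as the paper's proof: reduce to Lemma \ref{lem:cre}, substitute $y=x^2$, take logarithms to get $n_1H_1(y)+n_2H_2(y)<0$, rewrite as $n_1/n_2 > -H_2(y)/H_1(y)$, define $\tau_0$ from the supremum $R$ of that ratio via $\tau_0=R/(1+R)$, use the finite limit $-\Delta^2(\alpha^1)^2/(\Delta^1(\alpha^2)^2)$ at $y\to 0^+$ for finiteness, and handle the critical case $\Delta^1=0$ by a first-order expansion at $y=0$. The one place where you genuinely diverge is the reduction of the supremum from $(0,+\infty)$ to $(0,\Gamma^2]$: the paper invokes the Möbius-type symmetry $H_2\bigl((\alpha^2)^2(-\Delta^2-y)/((\alpha^2)^2+(\gamma^2)^2y)\bigr)=H_2(y)$ on $[0,-\Delta^2]$ together with the monotonicity of $H_1$, whereas you argue directly that $-H_2/H_1\le 0$ on $[-\Delta^2,+\infty)$ and is strictly decreasing on $(\Gamma^2,-\Delta^2)$ because $H_2$ decreases past its unique maximum $\Gamma^2$ while $-H_1$ increases. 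Your version is more elementary and avoids verifying the symmetry identity, at the cost of not exposing the structural fact the paper later reuses when excluding $\Gamma^2$ from the set of candidate maximizers in its bound computations. One cosmetic caveat: you claim the maximum over the half-open interval $(0,\Gamma^2]$ is ``attained''; if the supremum is only achieved in the limit $y\to 0^+$ you should, as the paper does, either adjoin $y=0$ with the value $-H_2'(0)/H_1'(0)$ or replace ``at the maximizer $y_*$'' in the instability step by a continuity argument producing some $y>0$ with $-H_2(y)/H_1(y)>n_1/n_2$; this does not affect the validity of the result.
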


Finally, even though $\tau_{0}$ can be {\color{black} easily calculated with the help of a minimization algorithm} we show some simpler upper and lower bounds.
\begin{cor}
The critical penetration rate $\tau_{0}$ defined in Theorem \ref{th2} satisfies
\begin{equation}
\begin{split}
\tau_{0} &\geq \frac{-\Delta^2 (\alpha^1)^2}{\Delta^1 (\alpha^2)^2-\Delta^2 (\alpha^1)^2},\\
\text{ and }\;\;\; \tau_{0}&\leq  \frac{(-\Delta^2) \left((\alpha^1)^2+ (\gamma^1)^2 \Gamma^2\right)\left((\alpha^1)^2+ ((\beta^1)^2- 2\alpha^1) \Gamma^2+ (\Gamma^2)^2\right)}{(\beta^2)^2 (\alpha^1)^2 \Delta^1+ (-\Delta^2) \left((\alpha^1)^2+ (\gamma^1)^2 \Gamma^2\right)\left((\alpha^1)^2+ ((\beta^1)^2- 2\alpha^1) \Gamma^2+ (\Gamma^2)^2\right)}.
\end{split}
\end{equation}
\end{cor}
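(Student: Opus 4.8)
The plan is to reduce both inequalities to bounds on the single number
\[
R := \max\left\{-\frac{H_2(y)}{H_1(y)} : y\in(0,\Gamma^2]\right\},
\]
because \eqref{eq:deftau0} says exactly $\tau_0 = 1-(1+R)^{-1} = R/(1+R)$, and $x\mapsto x/(1+x)$ is strictly increasing. Writing $N_i(y):=(\alpha^i)^2+(\gamma^i)^2 y$ and $E_i(y):=(\alpha^i)^2+((\beta^i)^2-2\alpha^i)y+y^2$ for the numerator and denominator of the fraction inside $H_i$, one has the identity $N_i(y)-E_i(y)=y(-\Delta^i-y)$; hence $H_1=\log(N_1/E_1)<0$ on $(0,\infty)$ since $\Delta^1>0$, while $H_2=\log(N_2/E_2)>0$ on $(0,-\Delta^2)\supseteq(0,\Gamma^2]$ since $\Gamma^2\in(0,-\Delta^2)$. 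Therefore $-H_2/H_1>0$ on $(0,\Gamma^2]$, $R\in(0,\infty)$, $\tau_0\in(0,1)$, and the two claimed bounds are precisely $\tau_0\geq R_-/(1+R_-)$ and $\tau_0\leq R_+/(1+R_+)$ with
\[
R_- := \frac{-\Delta^2(\alpha^1)^2}{\Delta^1(\alpha^2)^2},\qquad R_+ := \frac{(-\Delta^2)\,N_1(\Gamma^2)\,E_1(\Gamma^2)}{(\beta^2)^2(\alpha^1)^2\Delta^1}.
\]
So it suffices to prove $R_-\leq R\leq R_+$.

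For $R\geq R_-$ I would let $y\to0^+$. A first-order expansion of the logarithm gives $H_i(y)=\tfrac{-\Delta^i}{(\alpha^i)^2}\,y+O(y^2)$, so $-H_2(y)/H_1(y)\to\tfrac{-\Delta^2(\alpha^1)^2}{\Delta^1(\alpha^2)^2}=R_-$ as $y\to0^+$. Since $R$ is the supremum over $(0,\Gamma^2]$ of a function whose limit at $0$ is $R_-$, we get $R\geq R_-$, and monotonicity of $x\mapsto x/(1+x)$ yields the lower bound.

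For $R\leq R_+$ the idea is to majorize $-H_2/H_1$ by an explicit rational function. From the two-sided estimate $1-1/t\leq\log t\leq t-1$ (valid for $t>0$) we obtain, for $y\in(0,\Gamma^2]$,
\[
H_2(y)\leq\frac{N_2(y)-E_2(y)}{E_2(y)}=\frac{y(-\Delta^2-y)}{E_2(y)},\qquad -H_1(y)\geq\frac{E_1(y)-N_1(y)}{E_1(y)}=\frac{y(\Delta^1+y)}{E_1(y)},
\]
hence $-H_2(y)/H_1(y)\leq\Phi(y):=\dfrac{(-\Delta^2-y)\,E_1(y)}{(\Delta^1+y)\,E_2(y)}$ (the factor $y$ cancels). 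It then remains to show $\Phi\leq R_+$ on $(0,\Gamma^2]$ by monotone estimates: $-\Delta^2-y\leq-\Delta^2$ and $\Delta^1+y\geq\Delta^1$ since $0<y\leq\Gamma^2<-\Delta^2$; $E_1$ is increasing since $E_1'(y)=\Delta^1+(\gamma^1)^2+2y>0$, so $E_1(y)\leq E_1(\Gamma^2)$; and a lower bound for $E_2(y)$ drawn from the sum-of-squares form $E_2(y)=(\alpha^2-y)^2+(\beta^2)^2 y$, combined with the identity $(\alpha^2)^2(-\Delta^2-\Gamma^2)=\Gamma^2 N_2(\Gamma^2)$, which follows from the quadratic $(\gamma^2)^2 y^2+2(\alpha^2)^2 y+(\alpha^2)^2\Delta^2=0$ whose positive root is exactly $\Gamma^2$. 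Assembling these collapses $\Phi$ to $R_+$ and gives the upper bound.

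The main obstacle is this last step: both numerator and denominator of $\Phi$ vanish linearly at $y=0$, so $H_2$ and $-H_1$ cannot be estimated separately — e.g.\ $\sup H_2/\inf(-H_1)$ is useless since $\inf_{(0,\Gamma^2]}(-H_1)=0$. One must cancel the factors of $y$ first and then control $\Phi$ uniformly on all of $(0,\Gamma^2]$, reconciling its value $\Phi(0)=R_-$ near $0$ with the cruder behaviour away from $0$. Choosing the lower bound on $E_2(y)$ so that it holds on the whole interval while still producing the stated closed form $R_+$ is the delicate point; the remaining manipulations are routine monotonicity arguments and a little algebra with the defining relation for $\Gamma^2$.
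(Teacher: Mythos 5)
Your reduction to bounding $R=\max_{(0,\Gamma^2]}(-H_2/H_1)$ and your lower bound are correct and coincide with the paper's argument: the paper likewise obtains $N_0\geq -H_2'(0)/H_1'(0)=\tfrac{-\Delta^2(\alpha^1)^2}{\Delta^1(\alpha^2)^2}$ by letting $y\to0^+$ and then passes through the increasing map $x\mapsto x/(1+x)$.

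The upper bound, however, has a genuine gap at exactly the step you flag as delicate. After your estimates $-\Delta^2-y\leq-\Delta^2$, $\Delta^1+y\geq\Delta^1$ and $E_1(y)\leq E_1(\Gamma^2)$, what remains is
\begin{equation*}
\Phi(y)\leq \frac{(-\Delta^2)\,E_1(\Gamma^2)}{\Delta^1\,E_2(y)},
\end{equation*}
so to reach the stated $R_+$ you would need $E_2(y)\geq (\beta^2)^2(\alpha^1)^2/N_1(\Gamma^2)$ on $(0,\Gamma^2]$. No such bound can come out of the sum-of-squares form of $E_2$ or of the defining relation for $\Gamma^2$: the left-hand side depends only on $\Lambda^2$, while the right-hand side involves $\alpha^1,\gamma^1$. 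The factors $N_1(\Gamma^2)$ in the numerator of $R_+$ and $(\beta^2)^2(\alpha^1)^2\Delta^1$ in its denominator are fingerprints of a different decomposition. The paper first shows that $N_0$ is attained on $\mathcal{E}\cup\{0\}$, where $\mathcal{E}$ is the set of critical points of $-H_2/H_1$, observes that at such points $-H_2/H_1=-H_2'/H_1'$, and hence bounds $N_0\leq\sup_{[0,\Gamma^2]}(-H_2'/H_1')$. That ratio of derivatives factors as
\begin{equation*}
-\frac{H_2'(y)}{H_1'(y)}=\frac{-(\gamma^2)^2y^2-2(\alpha^2)^2y-(\alpha^2)^2\Delta^2}{N_2(y)E_2(y)}\cdot\frac{N_1(y)E_1(y)}{(\gamma^1)^2y^2+2(\alpha^1)^2y+(\alpha^1)^2\Delta^1},
\end{equation*}
and the four term-by-term bounds (first numerator $\leq -(\alpha^2)^2\Delta^2$, second denominator $\geq(\alpha^1)^2\Delta^1$, $N_1E_1\leq N_1(\Gamma^2)E_1(\Gamma^2)$, $N_2E_2\geq(\alpha^2)^2(\beta^2)^2$) produce exactly $R_+$. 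Your $\Phi$, obtained from $\log t\leq t-1$, discards the factors $N_1$ and $N_2$ and therefore cannot collapse to $R_+$; at best it yields a bound of the form $(-\Delta^2)E_1(\Gamma^2)/(\Delta^1\inf_{[0,\Gamma^2]}E_2)$, which is neither equal to nor obviously smaller than $R_+$. To prove the corollary as stated you need the passage to the derivative ratio (or some equivalent device that reintroduces $N_1$ and $N_2$ into the estimate).
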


{\color{black}As we can see that Theorem \ref{th1}--\ref{th13} in Section \ref{sec:main-results} are  direct consequences of the more detailed  Theorems \ref{th11}--\ref{th2}, in the following we only give the proofs of the latter theorems. }

\begin{proof}[Proof of Theorem \ref{th2}] 
 Note that it suffices to study the exponential stability of the linearized system \eqref{eq:54} since the local exponential stability of the nonlinear system follows. At first we prove point (1) of this theorem.
Looking at Lemma \ref{lem:cre}, it is thus sufficient to investigate \eqref{eq:keyor}. Let us first get an intuition about what happens depending on the proportion of stable and unstable vehicle.
 To simplify the condition, we can set $y = x^{2}$ and $\mu=n_{1}/n_{2}$, namely $\mu/(1+\mu)$ is the proportion of stable vehicle in the traffic. The condition \eqref{eq:keyor} becomes 
\begin{equation}
         \left(\frac{(\alpha^1)^2+ (\gamma^1)^2 y}{(\alpha^1)^2+ ((\beta^1)^2- 2\alpha^1) y+ y^2}\right)^{\mu} \left(\frac{(\alpha^2)^2+ (\gamma^2)^2 y}{(\alpha^2)^2+ ((\beta^2)^2- 2\alpha^2) y+ y^{2}}\right)< 1, \forall y\in \mathbb{R}_{+}^{*}.
\end{equation}
We set 
\begin{equation}
         h_{1}(y) = \left(\frac{(\alpha^1)^2+ (\gamma^1)^2 y}{(\alpha^1)^2+ ((\beta^1)^2- 2\alpha^1) y+ y^2}\right)
\end{equation}
and we define $h_{2}$ similarly. We observe that, for $i\in \{1, 2\}$,
\begin{equation}
\label{eq:hideri}
h_{i}'(y) =\frac{-(\gamma^{i})^{2}y^{2}-2(\alpha^{i})^{2}y-(\alpha^{i})^{2}\Delta^{i}}{\left((\alpha^i)^2+ ((\beta^i)^2- 2\alpha^i) y+ y^2\right)^{2}},
\end{equation}
where, we recall that  $\Delta^{i} = (\beta^{i})^2-(\gamma^{i})^2-2\alpha^{i}$. This means that $h_{i}$ has at most two points where its derivative vanishes and these potential points are given by
\begin{equation}
    y_{\pm}=-\frac{(\alpha^{i})^{2}}{(\gamma^{i})^{2}}\left(1\mp\sqrt{1-\frac{(\gamma^{i})^{2}}{(\alpha^{i})^{2}}\Delta^{i}}\right).
\end{equation}
However, note that we are only interested in the values of $h_{i}$ on $[0,+\infty)$. This gives some insight about what happens when $\Delta^{i}$ moves from a positive value (stable region) to a negative value (possibly unstable region): when $\Delta^{i}$ is positive there is no non-negative vanishing points of $h_{i}'$, which means that $h_{i}$ start at the value $h_{i}(0)=1$ and then decrease strictly continuously until it reaches the limit $\lim_{y\rightarrow+\infty} h_{i}(y)=0$. When $\Delta_{i}$ is negative, then $y_{+}$ is the only positive vanishing point of $h'$ which means that $h_{i}$ still starts at the value $h_{i}(0)=1$ but increase strictly up to $y=y_{+}$ and becomes larger than $1$. The critical case $\Delta^{i}=0$ corresponds to the special case where $y_{+}=0$ and therefore $h$ still decreases strictly on $[0,+\infty)$. Let us now prove (1) of Theorem \ref{th2}
\\

{\bf Quantitative characterization of the optimal choice of $\tau_0$}

We define
\begin{equation}
\label{defHi}
H_{i}(y) = \log(h_{i}) = \log\left(\frac{(\alpha^{i})^2+ (\gamma^{i})^2 y}{(\alpha^{i})^2+ ((\beta^{i})^2- 2\alpha^{i}) y+ y^2}\right).
\end{equation}
then for any given $y>0$, \eqref{eq:keyor} is equivalent to
\begin{equation}\label{eq:keyor00}
    \mu H_1(y)+ H_2(y)<0,\;\;\text{  }\forall\; y>0.
\end{equation}
Since $H_1(y)<0 \; \forall y>0$, the preceding condition  is equivalent to having
\begin{equation}
\label{eq:keyor2}
      \mu> -\frac{H_2(y)}{H_1(y)},\;\;\text{  }\forall\; y>0.
\end{equation}
Therefore, it {\color{black}leads us} to introduce the quantity
\begin{equation}
    K:= \sup \left\{-\frac{H_2(y)}{H_1(y)};\;  y\in \left(0, +\infty\right)\right\},
\end{equation}
and to show that this quantity is finite. If so, it suffices to choose $\mu> K$ to conclude the exponential
stability. We will show the following:  define
\begin{gather}
    N_0= \max \left\{-\frac{H_2(y)}{H_1(y)};\;  y\in \left(0, \Gamma^2\right]\right\}<+ \infty, \label{eq:op1}\\
    \Gamma^2:= \frac{-(\alpha^2)^2+  \sqrt{(\alpha^2)^4- (\alpha^2)^2(\gamma^2)^2 \Delta^2}}{(\gamma^2)^2}\in (0, -\Delta^2), \label{def:Gamma2}\\
   \tau_0:= \frac{N_0}{N_0+1}, \label{eq:op2}
\end{gather}
then $K = N_{0}<+\infty$ and if 
\begin{equation}
    \label{condt0}
    \frac{n_{1}}{n_{1}+n_{2}} > \tau_{0},
\end{equation}
condition \eqref{eq:keyor2} is satisfied and consequently the system is exponentially stable around the considered equilibrium flow.\\

From \eqref{eq:hideri}
\begin{gather}\label{eq:deri}
    H'_1(y)= \frac{-(\gamma^1)^2 y^2- 2(\alpha^1)^2 y- (\alpha^1)^2\Delta^1}{\left((\alpha^1)^2+ (\gamma^1)^2 y\right)\left((\alpha^1)^2+ ((\beta^1)^2- 2\alpha^1) y+ y^2\right)}, \\
     H'_2(y)= \frac{-(\gamma^2)^2 y^2- 2(\alpha^2)^2 y- (\alpha^2)^2\Delta^2}{\left((\alpha^2)^2+ (\gamma^2)^2 y\right)\left((\alpha^2)^2+ ((\beta^2)^2- 2\alpha^2) y+ y^2\right)}.
\end{gather}
And using this together with \eqref{defHi}, we deduce that
\begin{gather}
    H_1(0)= H_2(0)=0, \label{eq:H1H1} \\
  H_1(y)<0,\;\;   H'_1(y)<0,\;\; \forall y\in (0, +\infty),\\
  H_2(y)<0,\;\; \forall y\in (-\Delta^2, +\infty).
\end{gather}
Concerning $H_{2}$ observe that we have the following key symmetry
\begin{equation}
      H_2\left(\frac{(\alpha^2)^2(-\Delta^2-y)}{(\alpha^2)^2+ (\gamma^2)^2 y}\right)= H_2(y),\;\; \forall y\in [0, -\Delta^2]. \label{eq:H2sys}
\end{equation}
This implies, by recalling the definition of $\Gamma^2$ in \eqref{def:Gamma2}, 

we have 
\begin{equation}\label{eq:opt}
   \sup \left\{-\frac{H_2(y)}{H_1(y)};\;  y\in \left(0, \Gamma^2\right]\right\}=  \sup \left\{-\frac{H_2(y)}{H_1(y)};\;  y\in (0, +\infty)\right\},
\end{equation}
or equivalently $N_{0} = K$. {\color{black}Considering the fact $H_{2}/H_{1}$ is a continuous function, in order to prove that $N_{0}$ is bounded it only remains to show that there exists a  finite limit as $y$ tends to $0^{+}$.} From \eqref{eq:deri},
\begin{equation}\label{eq:lower-bound}
    \lim_{y\rightarrow 0^+} -\frac{H_2(y)}{H_1(y)}=   -\frac{H'_2(0)}{H'_1(0)}= \frac{-\Delta^2 (\alpha^1)^2}{\Delta^1 (\alpha^2)^2}\in (0, +\infty),
\end{equation}
which concludes that $N_0<+\infty$. As $n_{1} = \mu n_{2}$, \eqref{condt0} is equivalent to $\mu>N_{0}$.

To show that $\tau_{0}$ is optimal, is suffices to observe that if $\mu< N_{0}$ (or equivalently $n_1/(n_1+ n_2)< \tau_0$) then by continuity there exists a subset $[y_{1},y_{2}]\subset(0,\Gamma^{2}]$ with $y_{1}\neq y_{2}$ such that
\begin{equation}
\mu <- \frac{H_{1}(y)}{H_{2}(y)},\;\;\text{ for any }y\in[y_{1},y_{2}],
\end{equation}
which implies that for any $y\in [y_{1},y_{2}]$, $y>0$ and 
\begin{equation}
\label{eq:t0opti}
\left(\frac{(\alpha^1)^2+ (\gamma^1)^2 y}{(\alpha^1)^2+ ((\beta^1)^2- 2\alpha^1) y+ y^2}\right)^{\mu} \left(\frac{(\alpha^2)^2+ (\gamma^2)^2 y}{(\alpha^2)^2+ ((\beta^2)^2- 2\alpha^2) y+ y^{2}}\right)> 1.
\end{equation}
Setting $x =\sqrt{y}$, \eqref{eq:t0opti} together with Lemma \ref{lem:cre} and \eqref{eq:keyorine} allows us to conclude that there exists $M$ large enough such that for any $n_{1}>M$ and $n_{2}>M$ {\color{black}satisfying $n_1/(n_1+n_2)=\textcolor{black}{\tau}$},
the system \eqref{eq:new-nonlinear} is unstable around the equilibrium flow $(\bar{h}_{i},\bar v)_{i\in\{1,2\}}$.

\textcolor{black}{We have now proved the existence of the critical penetration rate $\tau_{0}$ and we obtained a quantitative characterization. Note that $\tau_{0}$ can be easily computed by a minimization algorithm using \eqref{eq:op1}--\eqref{eq:op2} and provides the optimal penetration rate of stable cars to stabilize the traffic. 
In the following, for the qualitatively study convenience,  we also present some lower and upper bounds of  $\tau_0$ (or equivalently, some lower and upper bounds of $N_0$).}

{\bf A simple lower bound of $\tau_0$.}

Thanks to \eqref{eq:lower-bound}, we see that
\begin{equation}
    N_{0}\geq \frac{-\Delta^2 (\alpha^1)^2}{\Delta^1 (\alpha^2)^2},
\end{equation}
hence a lower bound of $\tau_0$ can be expressed by
\begin{equation}
    B_l:= \frac{-\Delta^2 (\alpha^1)^2}{\Delta^1 (\alpha^2)^2-\Delta^2 (\alpha^1)^2}.
\end{equation}
Recall that $-\Delta^{2}<0$, so $B_{l}\in(0,1)$.

{\bf A simple upper  bound of $\tau_0$.}

The precise value of $N_0$ is given by  \eqref{eq:op1}, but it is rather  difficult 
to determine by hand. Indeed, to do so we would need to compare the extreme points of $- H_2/ H_1$, which are given by 
\begin{equation}\label{eq:extremevalue}
  \mathcal{E}:= \{z\in (0, \Gamma^2):   H'_2(z) H_1(z)- H_2(z) H'_1(z)= 0\}.
\end{equation}
In terms of those extreme points, $N_0$ is further given by 
\begin{equation}\label{N-ex2}
    N_0= \max \left\{ -\frac{H_2(y)}{H_1(y)}: y\in \mathcal{E}\cup \{0, \Gamma^2\} \right\},
\end{equation}
where 
\begin{equation}\label{eq:def-value0}
    -\frac{H_2(0)}{H_1(0)}:=-\frac{H'_2(0)}{H'_1(0)}= \frac{-\Delta^2 (\alpha^1_1)^2}{\Delta^1 (\alpha^2_1)^2}.
\end{equation}
Actually, we can exclude $\Gamma^2$ in $\{0, \Gamma^2\}$ from the expression \eqref{N-ex2}: suppose that $\Gamma^2\in \mathcal{E}$ then $\mathcal{E}\cup \{0, \Gamma^2\}= \mathcal{E}\cup \{0\}$; otherwise, there exists some $y_0\in (\Gamma^2-\delta, \Gamma^2+ \delta)$ such that $-H_2(y_0)/H_1(y_0)$ is bigger than those of $\Gamma^2$,  then thanks to the symmetric property of $H_2$ and the monotonous property fo $H_1$, \eqref{eq:H1H1}--\eqref{eq:H2sys},  there exists some $y_1\in (\Gamma^2-\delta, \Gamma^2)$ such that $-H_2(y_1)/H_1(y_1)$ is bigger than those of $\Gamma^2$. Therefore,
\begin{equation}\label{N-ex3}
    N_0= \max \left\{ -\frac{H_2(y)}{H_1(y)}: y\in \mathcal{E}\cup \{0\} \right\}.
\end{equation}

Next, notice that for any extreme point $z\in \mathcal{E}$ we have 
\begin{equation}
    - \frac{H_2(z)}{H_1(z)}=   - \frac{H'_2(z)}{H'_1(z)},
\end{equation}
which, to be combined with \eqref{eq:def-value0}, yield
\begin{equation}\label{N-ex4}
    N_0= \max \left\{ -\frac{H'_2(y)}{H'_1(y)}: y\in \mathcal{E}\cup \{0\} \right\}.
\end{equation}
We remark here that, though it is relatively easy to get the maximum value of $-H'_2/H'_1$ in $[0, \Gamma^2]$ (as its extreme points can be calculated explicitly via polynomials), this value is not necessarily equivalent to $N_0$. More precisely, 
\begin{equation}
    N_0\leq \max \left\{ -\frac{H'_2(y)}{H'_1(y)}: y\in [0, \Gamma^2] \right\}=: \widetilde N_0.
\end{equation}
$\widetilde N_0$ can also be expressed in terms of extreme points:
\begin{gather}
    \widetilde N_0= \max \left\{ -\frac{H'_2(y)}{H'_1(y)}: y\in \widetilde{\mathcal{E}} \right\}, \\
    \widetilde{\mathcal{E}}:= \{z\in [0, \Gamma^2]: H''_2(z) H'_1(z)- H'_2(z)H''_1(z)=0\}. 
\end{gather}

In the following we present a simple upper bound for $\widetilde N_0$.  Observe that $- H'_2(y)/ H'_1(y)$ is characterized by 
{\small
\begin{equation}
    \frac{-(\gamma^2)^2 y^2- 2(\alpha^2)^2 y- (\alpha^2)^2\Delta^2}{\left((\alpha^2)^2+ (\gamma^2)^2 y\right)\left((\alpha^2)^2+ ((\beta^2)^2- 2\alpha^2) y+ y^2\right)}\cdot \frac{\left((\alpha^1)^2+ (\gamma^1)^2 y\right)\left((\alpha^1)^2+ ((\beta^1)^2- 2\alpha^1) y+ y^2\right)}{(\gamma^1)^2 y^2+ 2(\alpha^1)^2 y+ (\alpha^1)^2\Delta^1},
\end{equation} }

while for $y\in [0, \Gamma^2]$ there are
{\small
\begin{gather}
    (\gamma^1)^2 y^2+ 2(\alpha^1)^2 y+ (\alpha^1)^2\Delta^1\geq (\alpha^1)^2\Delta^1, \\
    \left((\alpha^1)^2+ (\gamma^1)^2  y\right)\left((\alpha^1)^2+ ((\beta^1)^2- 2\alpha^1) y+ y^2\right)\leq \left((\alpha^1)^2+ (\gamma^1)^2 \Gamma^2\right)\left((\alpha^1)^2+ ((\beta^1)^2- 2\alpha^1) \Gamma^2+ (\Gamma^2)^2\right),\\
    -(\gamma^2)^2 y^2- 2(\alpha^2)^2 y- (\alpha^2)^2\Delta^2\leq - (\alpha^2)^2\Delta^2, \\
    \left((\alpha^2)^2+ (\gamma^2)^2 y\right)\left((\alpha^2)^2+ ((\beta^2)^2- 2\alpha^2) y+ y^2\right)\geq (\alpha^2)^2 (\beta^2)^2.
\end{gather} }

Consequently,
\begin{equation}
    \widetilde N_0\leq \frac{(-\Delta^2) \left((\alpha^1)^2+ (\gamma^1)^2 \Gamma^2\right)\left((\alpha^1)^2+ ((\beta^1)^2- 2\alpha^1) \Gamma^2+ (\Gamma^2)^2\right)}{(\beta^2)^2 (\alpha^1)^2 \Delta^1},
\end{equation}
which further implies the following upper bound of $\tau_0$: 
\begin{equation}
    B_u:=  \frac{(-\Delta^2) \left((\alpha^1)^2+ (\gamma^1)^2 \Gamma^2\right)\left((\alpha^1)^2+ ((\beta^1)^2- 2\alpha^1) \Gamma^2+ (\Gamma^2)^2\right)}{(\beta^2)^2 (\alpha^1)^2 \Delta^1+ (-\Delta^2) \left((\alpha^1)^2+ (\gamma^1)^2 \Gamma^2\right)\left((\alpha^1)^2+ ((\beta^1)^2- 2\alpha^1) \Gamma^2+ (\Gamma^2)^2\right)}.
\end{equation}

Let us now prove point (2) of Theorem \ref{th2}. We define again $\mu = n_{1}/n_{2}$. Suppose that $\Delta^1= 0$ and $\Delta^2<0$.  Thanks to \eqref{eq:deri}, we know that 
\begin{equation}
     \mu H'_1(0)+ H'_2(0)= -\frac{\Delta^2}{(\alpha^2)^2}>0,
\end{equation}
which, to be combined with the fact that $ \mu H_1(0)+ H_2(0)=0$, imply the existence of $y>0$ such that 
\begin{equation}
     \mu H_1(y)+ H_2(0)>0.
\end{equation}
As a direct consequence, the system is not stable.\\
\end{proof}

\section{Multi-phase collaborative driving}
\label{sec:multi}

In this section, similarly to the preceding Section, we study the stability of the equilibrium flows in a multi-phase mixed traffic flow.  For any given equilibrium velocity  $\bar{v}$, using the notation $\Lambda_j$ and \eqref{defdet},  we have that 
\begin{equation}
    \Lambda_j= (\alpha_j, \beta_j, \gamma_j) \in \{\Lambda^1,..., \Lambda^m\}= \{(\alpha^1, \beta^1, \gamma^1),...,(\alpha^m, \beta^m, \gamma^m)\}.
\end{equation}

Again, for $k\in \{1,2,...,m\}$ we  denote by $n_{k}$ the number of {\it Type k vehicles}  with parameters $\Lambda^{k}$ such that the total number of vehicles is $n=\sum_{j=1}^m n_j$.

For any ordering of the vehicle on road, $i.e.$ the $j$-th vehicle is of {\it Type $a_j$}, the linearized system around the unique equilibrium flow is 
\begin{gather}
\begin{cases}
 \dot y_j= u_{j+1}- u_j, \\
 \dot u_j= \alpha_j y_j- \beta_j u_j+ \gamma_j u_{j+1},\\
  (\alpha_j, \beta_j, \gamma_j)=  (\alpha^{a_j}, \beta^{a_j}, \gamma^{a_j}), \\
  (y_{j}(t),u_{j}(t))_{j\in\{1,...,n\}}\in \mathcal{H}.
\end{cases}
\end{gather}
Similar to Theorem \ref{th11} and Theorem \ref{th2} we have the following theorem concerning the stability of the $m$-phase mixed ring road traffic.

Indeed, similar to the two-phase traffic,  we look at the ring road traffic with $m$ populations having respectively $(n_1, n_2,..., n_m)$ many vehicles. The local exponential stability of this system is still equivalent to the study of the real part of eigenvalues of the linearized system that are explicitly given by the roots of the polynomial \eqref{eq:showeing} presented in Section \ref{sec-studyeigen}:
\begin{equation}
    \prod_{j=1}^{n} F_j(\omega)=1, \;\; F_j(\omega):= \frac{\gamma_j \omega+ \alpha_j}{\omega^2 +\beta_j\omega+ \alpha_j}.
\end{equation}

Again, it is not easy to calculate the exact roots of the polynomial. Instead, we investigate the value of the polynomial on the imaginary axis. The question becomes:
\begin{equation}\label{cre}
  \textrm{whether }\;   \prod_{k=1}^m \left(\frac{(\alpha^k)^2+ (\gamma^k)^2 x^2}{(\alpha^k)^2+ ((\beta^k)^2- 2\alpha^k) x^2+ x^4}\right)^{n_k}  < 1\;\text{ holds for any }\; x\in \mathbb{R}\setminus\{0\}?
\end{equation}
Thanks to the same reasoning as in Lemma \ref{lem:cre}, we get
\begin{itemize}
    \item if   Condition \eqref{cre} is satisfied, then all the roots of the polynomial excluding 0 are distributed on the left complex region $i.e.$ $\{z\in \mathbb{C}: \Re{z}<0\}$. Hence, the linearized system is exponentially stable.
    \item if   Condition \eqref{cre} is not verified, and if further there exists some $x_0\in \mathbb{R}$ such that the value of the function in \eqref{cre} is strictly large than 1, then the ring road traffic system with these penetration rates of vehicles is not stable provided sufficiently many. cars\textcolor{black}{ This is obtained using the same reasoning as in the two population case (see \eqref{eq:keyorine-2}--\eqref{eq:keyorineend}).}
\end{itemize}
 Recalling  Definition \ref{def-delta}--\ref{def-discri-delta} concerning the classification of $\Delta^k$, we know that 
\begin{itemize}
    \item if $\Delta^k>0$, then 
   \begin{equation}\label{eq:sec6-1}
       \left(\frac{(\alpha^k)^2+ (\gamma^k)^2 x^2}{(\alpha^k)^2+ ((\beta^k)^2- 2\alpha^k) x^2+ x^4}\right)^{n_k}  < 1, \forall x\in \mathbb{R}\setminus\{0\};
   \end{equation} 
     \item  if 
     $\Delta^k = 0$, then Inequality \eqref{eq:sec6-1} is also satisfied;
      \item if 
      $\Delta^k < 0$, then 
       \begin{equation}
       \left(\frac{(\alpha^k)^2+ (\gamma^k)^2 x^2}{(\alpha^k)^2+ ((\beta^k)^2- 2\alpha^k) x^2+ x^4}\right)^{n_k}  > 1, \textrm{ for some } x\in \mathbb{R}\setminus\{0\}.
   \end{equation} 
\end{itemize} 
This observation, together with Condition \eqref{cre}, finally lead to the following Theorem \ref{thm:61}. 

\begin{thm}\label{thm:61} 
Let given $\bar v>0$. We assume without loss of generality that $\Delta^1\geq\Delta^2\geq...\geq \Delta^m$.
\begin{itemize}
      \item[(1)] If $\Delta^m\geq 0$, then  for any $(n_1, n_2, n_3,..., n_m)\in \mathbb{N}^m$ and any ordering of the vehicles on the road, the ring road traffic system \eqref{eq:new-nonlinear} is locally exponentially stable around the equilibrium flow.
      \item[(2)] If $\Delta^{1}> 0$  and $\Delta^m<0$,  then there exists some effectively computable threshold constant $\tau_1\in (0, 1)$ depending on $\{\Lambda^{1},...,\Lambda^{m}\}$  such that for any {\color{black}m-tuple} $(n_1, n_2,..., n_m)\in \mathbb{N}^2$ verifying
\begin{equation}
    \frac{n_1}{\sum_{k=1}^m n_k} >  \tau_1, 
\end{equation}
the inequality \eqref{eq:keyor} is satisfied. In other words, for any ordering of the vehicles on the road $(a_1,a_2,..., a_n)\in \mathcal{K}$, the ring road traffic system \eqref{eq:new-nonlinear} is locally exponentially stable around the equilibrium flow associated to $\bar{v}$.
 \\
 On the other hand, we further assume that for some $k\in\{1,..., m-1\}$ there is $\Delta^k>0\geq \Delta^{k+1}$. There exists some $\tau_2\in (0, 1)$ effectively computable such that for any fixed penetration rate\textcolor{black}{s}
\begin{equation}
\left(\frac{n_1}{\sum_{j=1}^m n_j},..., \frac{n_m}{\sum_{j=1}^m n_j} \right)  \; \textrm{ satisfying } \;  \frac{ \sum_{j=1}^k n_j}{\sum_{j=1}^m n_j}< \tau_2
\end{equation}
there exists $M>0$ such that for any ${n_1,..., n_m}\in \mathbb{N}^m$ satisfying
$\sum_{k=1}^m n_k>M$,
the ring road traffic  system \eqref{eq:new-nonlinear}  is unstable around the  equilibrium flow associated to $\bar v$.

       \item[(3)]
       If $\Delta^1=0$ (namely, $\Lambda^1$ is critical), then for any fixed penetration rate of the cars the ring road traffic system \eqref{eq:new-nonlinear} is unstable around the equilibrium flow associated to $\bar{v}$  provided sufficiently many cars on road.
\end{itemize}
\end{thm}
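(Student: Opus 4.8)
The plan is to reduce the whole theorem to the scalar inequality \eqref{cre}, which the excerpt has already identified as governing stability, and then to study the single function
\[
S(y):=\sum_{k=1}^{m}n_{k}H_{k}(y),\qquad H_{k}(y):=\log\!\left(\frac{(\alpha^{k})^{2}+(\gamma^{k})^{2}y}{(\alpha^{k})^{2}+((\beta^{k})^{2}-2\alpha^{k})y+y^{2}}\right),
\]
for $y=x^{2}>0$. Stability is equivalent to $S(y)<0$ for every $y>0$ (the first half of Lemma \ref{lem:cre}, carried over verbatim to $m$ phases), whereas $S(y_{0})>0$ for a single $y_{0}>0$ forces instability once enough cars are present, via the meromorphic-function / roots-of-unity argument recalled just after \eqref{cre}. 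Throughout I will use, exactly as in \eqref{eq:deri}, that $H_{k}(0)=0$, $H_{k}'(0)=-\Delta^{k}/(\alpha^{k})^{2}$, that $H_{k}(y)\to-\infty$ as $y\to+\infty$, that $H_{k}(y)>0$ precisely on $(0,-\Delta^{k})$ (an empty interval when $\Delta^{k}\ge 0$), and that each $H_{k}$ enjoys the symmetry \eqref{eq:H2sys}; all of this depends only on the trios $\{\Lambda^{k}\}$ and on $(n_{1},\dots,n_{m})$, not on the ordering of the cars.

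Part (1) is then immediate: if every $\Delta^{k}\ge 0$, then each $h_{k}=e^{H_{k}}$ is strictly decreasing on $[0,+\infty)$ (the critical case $\Delta^{k}=0$ is still strictly decreasing), so $H_{k}(y)<0$ for all $y>0$, hence $S(y)<0$ and the system is exponentially stable in the sense of Definition \ref{def:sta}.

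For the stability half of Part (2) I would isolate the most stable population ($k=1$, with $\Delta^{1}>0$, so $H_{1}<0$ and strictly decreasing on $(0,+\infty)$) and use the crude bound $\sum_{k\ge 2}n_{k}H_{k}(y)\le (n-n_{1})\,\Phi(y)$ with $\Phi(y):=\max_{2\le k\le m}H_{k}(y)$. Since $\Phi(y)\ge 0$ only for $y\le-\Delta^{m}$ (and $-\Delta^{m}=\max_{k}(-\Delta^{k})$), the inequality $n_{1}H_{1}(y)+(n-n_{1})\Phi(y)<0$ is automatic where $\Phi\le 0$ and, elsewhere, equivalent to $\frac{n_{1}}{n-n_{1}}>\frac{\Phi(y)}{-H_{1}(y)}$. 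The quantity $N_{1}:=\sup\{\Phi(y)/(-H_{1}(y))\ :\ y\in(0,-\Delta^{m})\}$ is finite, because that ratio is continuous on the open interval with finite limits at both endpoints: as $y\to(-\Delta^{m})^{-}$ the numerator tends to $0$, and as $y\to 0^{+}$ it tends to $\bigl(\max_{2\le k\le m}(-\Delta^{k}/(\alpha^{k})^{2})\bigr)/\bigl(\Delta^{1}/(\alpha^{1})^{2}\bigr)\in(0,+\infty)$ by de l'Hôpital's rule. Setting $\tau_{1}:=N_{1}/(N_{1}+1)\in(0,1)$ then gives stability whenever $n_{1}/n>\tau_{1}$, for every ordering. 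For the instability half, and for Part (3), I would work near $y=0^{+}$: with the penetration rates fixed, $S'(0)=\sum_{j\,:\,\Delta^{j}<0}n_{j}\frac{-\Delta^{j}}{(\alpha^{j})^{2}}-\sum_{j=1}^{k}n_{j}\frac{\Delta^{j}}{(\alpha^{j})^{2}}$; bounding the stabilizing sum above by $\bigl(\sum_{j\le k}n_{j}\bigr)\max_{j\le k}\frac{\Delta^{j}}{(\alpha^{j})^{2}}$ and the destabilizing sum below through the populations with $\Delta^{j}<0$, one obtains $S'(0)>0$ — hence $S(y_{0})>0$ for small $y_{0}>0$ and, by \eqref{cre} and Lemma \ref{lem:cre}, instability for enough cars — as soon as $\frac{\sum_{j\le k}n_{j}}{n}$ lies below an explicit $\tau_{2}\in(0,1)$ coming from the comparison of $\max_{j\le k}\frac{\Delta^{j}}{(\alpha^{j})^{2}}$ with $\min_{j\,:\,\Delta^{j}<0}\frac{-\Delta^{j}}{(\alpha^{j})^{2}}$. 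Part (3) is the degenerate case of this: when $\Delta^{1}=0$ no population has $\Delta^{j}>0$, so the stabilizing sum is empty and $S'(0)=\sum_{j\,:\,\Delta^{j}<0}n_{j}\frac{-\Delta^{j}}{(\alpha^{j})^{2}}>0$ as soon as some $\Delta^{j}<0$ has $n_{j}\ge 1$, which yields instability for every penetration rate once there are enough cars.

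The step I expect to be the genuine obstacle is the instability half of Part (2): one must make sure $\tau_{2}$ can be taken to depend only on $\{\Lambda^{k}\}$ and not on how the non-type-1 mass is distributed among populations $2,\dots,m$, i.e. that the combined first-order stabilizing effect of all the $\Delta^{j}>0$ populations is really dominated once they form a small enough fraction of the traffic. Making that comparison uniform is where the monotonicity of $H_{1}$ and the symmetry \eqref{eq:H2sys} from the two-phase proof get reused, and where one must also handle with some care the populations that are merely critical ($\Delta^{j}=0$), which affect $S$ only at second order near $y=0$.
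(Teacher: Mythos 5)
Your overall route coincides with the paper's: reduce everything to the sign of $S(y)=\sum_{k}n_{k}H_{k}(y)$ for $y=x^{2}>0$ via \eqref{cre} and the $m$-phase version of Lemma \ref{lem:cre}, then read stability off $S<0$ on $(0,+\infty)$ and instability off $S(y_{0})>0$ at a single point. The paper only sketches this proof (it defers to the two-phase case and records $H_{i}(0)=0$ together with the signs of $H_{i}'(0)$), so for part (1), part (3) and the stability half of part (2) your write-up is actually more complete than the paper's: the bound $\sum_{k\ge 2}n_{k}H_{k}(y)\le (n-n_{1})\Phi(y)$ with $\Phi=\max_{2\le k\le m}H_{k}$, and the finiteness of $N_{1}=\sup\Phi/(-H_{1})$ obtained from the two endpoint limits on $(0,-\Delta^{m})$, give a legitimate explicit $\tau_{1}=N_{1}/(N_{1}+1)$ depending only on $\{\Lambda^{1},\dots,\Lambda^{m}\}$, which the paper never writes down in the multi-phase setting. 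Those parts are correct (part (3), like the paper's two-phase Theorem \ref{th13}, tacitly assumes that at least one strictly unstable population is actually present).

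The genuine gap is exactly the one you flag yourself: the instability half of part (2) when some of the populations $k+1,\dots,m$ are critical. Your first-order computation gives $S'(0)=\sum_{j:\Delta^{j}<0}n_{j}\tfrac{-\Delta^{j}}{(\alpha^{j})^{2}}-\sum_{j\le k}n_{j}\tfrac{\Delta^{j}}{(\alpha^{j})^{2}}$, and a threshold on $\sum_{j\le k}n_{j}/n$ alone cannot force this to be positive, because critical populations contribute to $n$ but not to the destabilizing sum. This is not a fixable presentation issue: take $m=3$ with $\Delta^{1}>0=\Delta^{2}>\Delta^{3}$ and penetration rates $(\varepsilon,1-\varepsilon-\delta,\delta)$. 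For any fixed $\varepsilon>0$ one can choose $\delta$ so small that $\varepsilon\bigl(-H_{1}(y)\bigr)>\delta H_{3}(y)$ on all of $(0,-\Delta^{3})$ (the ratio $(-H_{1})/H_{3}$ is bounded below by a positive constant there), and since $H_{2}<0$ on $(0,+\infty)$ this yields $S<0$ everywhere, i.e.\ stability for every $n$, even though $\sum_{j\le k}n_{j}/n=\varepsilon$ lies below any prescribed $\tau_{2}$. So the statement needs either the extra hypothesis $\Delta^{k+1}<0$ or the threshold to be imposed on $\sum_{j:\Delta^{j}\ge 0}n_{j}/n$; under either correction your comparison of $\max_{j\le k}\Delta^{j}/(\alpha^{j})^{2}$ with $\min_{j:\Delta^{j}<0}(-\Delta^{j})/(\alpha^{j})^{2}$ does close the argument. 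Note that the paper's own sketch slides over the same point by asserting $H_{i}'(0)>0$ for all $i\in\{k+1,\dots,m\}$, which is false when $\Delta^{i}=0$: you have located a real imprecision in the result rather than merely failing to reproduce a step.
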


Its proof is essentially similar to the proofs of Theorem \ref{th11}-\ref{th12}: the easier cases (1) and (3) are direct consequences of the preceding observations. Concerning the mixed case (2) such that both stable and unstable vehicles coexist, heuristically speaking,  if there is more stable cars on road then more likely Condition \ref{cre} is satisfied. Otherwise with fewer stable cars on road  the unstable cars will dominate the traffic to prevent us from getting Condition \ref{cre}. \textcolor{black}{This comes from the fact that we get the following condition for the stability instead of \eqref{eq:keyor00}
\begin{equation}
    \sum\limits_{i=1}^{n}n_{i}H_{i}(y)<0,\;\;\forall y\geq 0,
\end{equation}
where $H_{i}(y)=\log\left(\frac{(\alpha^{i})^2+ (\gamma^{i})^2 y}{(\alpha^{i})^2+ ((\beta^{i})^2- 2\alpha^{i}) y+ y^2}\right)$ is defined similarly as in \eqref{defHi}.} {\color{black}While in this specific case there is
\begin{gather}
    H_i(0)=0 \textrm{ and } H'_i(0)<0, \forall i\in \{1,2,...,k\}, \\
     H_i(0)=0 \textrm{ and } H'_i(0)>0, \forall i\in \{k+1,k+2,...,m\}.
\end{gather}
}

\section{Numerical experiments}\label{sec:numerics}
In this Section we present numerical experiments to illustrate Theorems \ref{th1}--\ref{th13} with two type of populations described by some set of parameters $\Lambda_{1}$ and $\Lambda_{2}$.\\

We assume for this example  that the vehicles are described by the nonlinear Bando-FTL model
\eqref{Bando-FTL} and we consider two populations described by the parameters $(a_{1}, b_{1})$ and $(a_{2},b_{2})$. We assume that they have the same velocity preference $V$ given by
\begin{equation}
    V(h)=V_{\max}\frac{\tanh(\frac{h-l_{v}}{d_{0}}-2)+\tanh(2)}{1+\tanh(2)},
\end{equation}
which is a usual choice for Bando-FTL \cite{bando1995dynamical}. We study a case where the first population has a very stable behavior on the road with $\Delta_{1}>0$ while the second population is made of slightly more aggressive driver that have a slightly unstable behavior such that $\Delta_{2}<0$ but  $|\Delta_{2}|<|\Delta_{1}|$.  To do so, we choose the parameters $a_{1}=4$, $b_{1}= 20$, $a_{2}=0.5$, $b_{2}=20$, thus the instability of the second population will simply come from a higher sensitivity to the velocity preference rather than the ``Follow-the-leader" behavior. We choose $L$ and $N$ such that $L/N=10.4 m$, which is a steady-state value similar to the setting of the real life experiment described in \cite{stern2018dissipation}. Note that parameters $(a_{2},b_{2})$ are typical values that were obtained in \cite{PohlmannSeibold} after calibration on data from real life experiments. From Theorem \ref{th2} we deduce the following:
\begin{equation}
\begin{split}
    \Delta_{1} = 7.28,\;\;\;
    \Delta_{2} = -0.84,\;\;\;\tau_{0}=0.881
    \end{split}
\end{equation}
as we can see, although $|\Delta_{1}|$ is one order of magnitude above $|\Delta_{2}|$, the penetration rate of stable vehicle needed for having a stable flow is very high and above $88\%$. This means that even a very low proportion of slightly more aggressive drivers in a large road can completely destabilize a traffic that would be very stable otherwise. In Figure \ref{fig1} (left) we show the speed variance across drivers of a traffic flow with 500 cars, a penetration rate of stable cars of $80\%$ ($\tau=0.802$) and we see that the system is quickly unstable as the speed variance only increase during the entire simulation. In Figure \ref{fig1} (right) we show the speed variance of the same traffic when the penetration rate of stable car is $88,2\%$ instead and we see that the speed variance, already low at initial time, decreases exponentially fast.
\begin{figure}[ht!]
    \centering
    \includegraphics[width=0.48\textwidth]{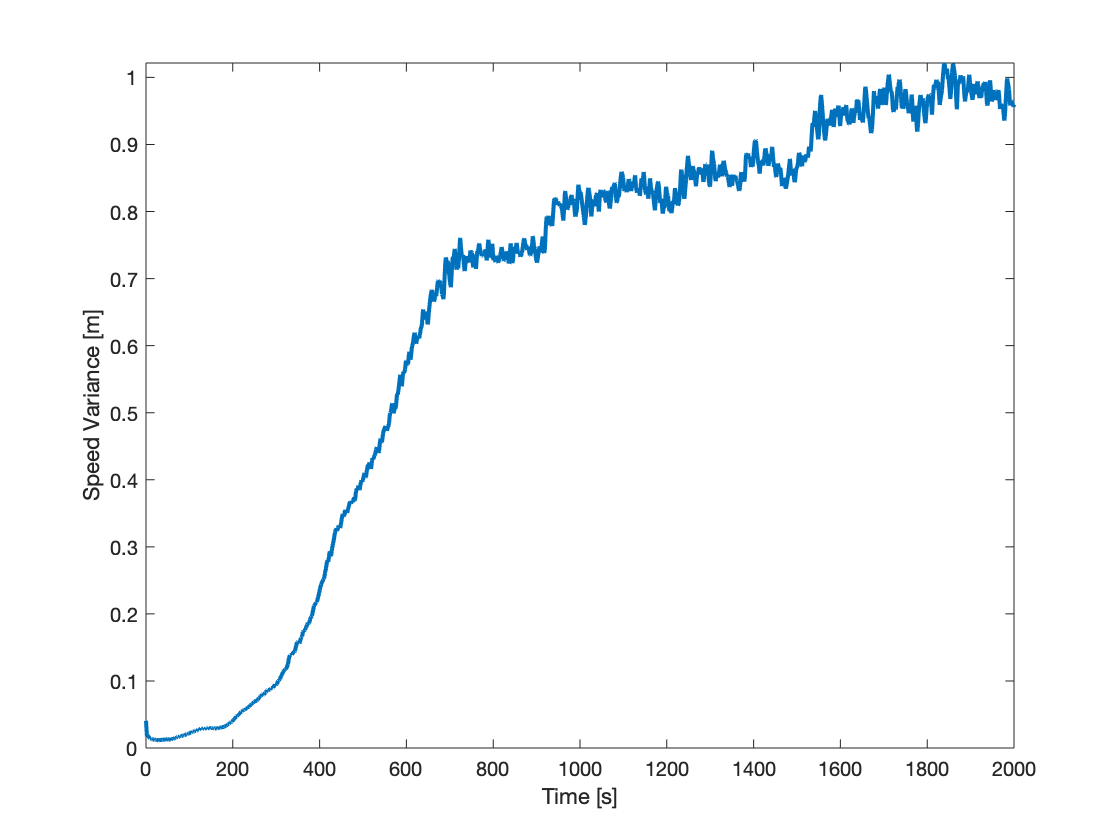}\includegraphics[width=0.48\textwidth]{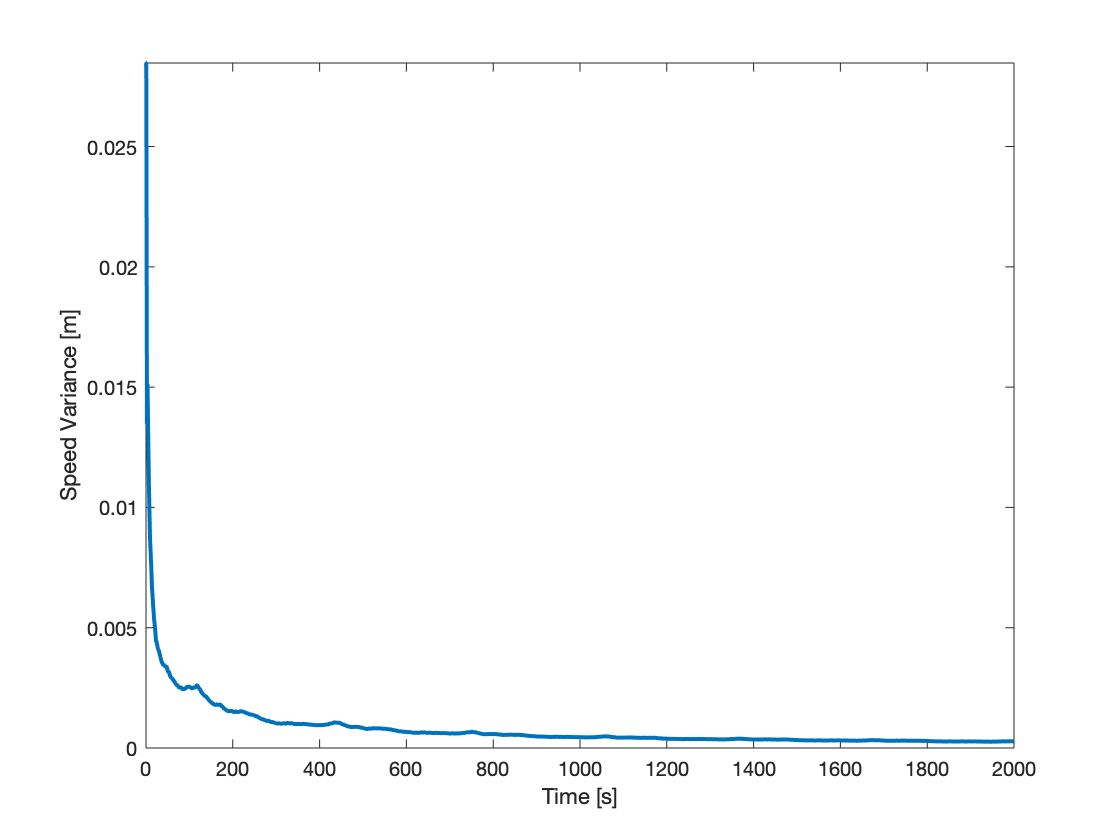}
    \caption{Speed variance over time in a two population traffic flow: drivers with stable behaviour ($\Delta_{1}=7.28$) and drivers with unstable behavior ($\Delta_{2}=-0.84$). Predicted critical penetration rate: $\tau_{0}=0.881$.
    \textbf{left:} $80\%$ drivers with stable behavior;   \textbf{right:} $88.2\%$ drivers with stable behavior.  \label{fig1}}
\end{figure}


\clearpage
 \bibliographystyle{plain}
 \bibliography{Biblio_traffic}
 \end{document}